\title{Translations: generalizing relative expressiveness between logics}
\author{Diego Pinheiro Fernandes\footnote{PhD student at University of Salamanca, Spain}}
\date{\today}
\newcommand{\T}{\mathcal{T}}
\newcommand{\ibid}{\emph{ibid}}
\newcommand{\U}{\mathfrak{U}}
\newcommand{\M}{\mathcal{M}}
\newcommand{\F}{\mathcal{F}}
\newcommand{\R}{\mathcal{R}}
\newcommand{\WPL}{\mathcal{WPL}}
\newcommand{\expr}{\preccurlyeq}
\newcommand{\restr}{\mathord{\upharpoonright}}
\renewcommand{\L}{\mathcal{L}}
\newcommand{\s}{\hspace{6mm}}
\newcommand{\MSL}{\mathcal{MSL}}
\newcommand{\CL}{\mathcal{CL}}
\newcommand{\IL}{\mathcal{IL}}
\newcommand{\IPL}{\mathcal{IPL}}
\newcommand{\CPL}{\mathcal{CPL}}
\newcommand{\FOL}{\mathcal{FOL}}
\renewcommand{\makehor}[4]
  {\ifthenelse{\equal{#1}{n}}{\hspace{#3}}{}
   \ifthenelse{\equal{#1}{s}}{\rule[-0.5#2]{#3}{#2}}{}
   \ifthenelse{\equal{#1}{d}}{\setlength{\lengthvar}{#2}
     \addtolength{\lengthvar}{0.5#4}
     \rule[-\lengthvar]{#3}{#2}
     \hspace{-#3}
     \rule[0.5#4]{#3}{#2}}{}
   \ifthenelse{\equal{#1}{t}}{\setlength{\lengthvar}{1.5#2}
     \addtolength{\lengthvar}{#4}
     \rule[-\lengthvar]{#3}{#2}
     \hspace{-#3}
     \rule[-0.5#2]{#3}{#2}
     \hspace{-#3}
     \setlength{\lengthvar}{0.5#2}
     \addtolength{\lengthvar}{#4}
     \rule[\lengthvar]{#3}{#2}}{}
   \ifthenelse{\equal{#1}{w}}{
     \setbox0=\hbox{$\sim$}%
     \raisebox{-.6ex}{\hspace*{-.05ex}\adjustbox{width=#3,height=\height}{\clipbox{0.75 0 0 0}{\usebox0}}}}{}
  }
\newtheorem{definition}[subsection]{Definition}
\newtheorem{proposition}[subsection]{Proposition}
\newtheorem{theorem}[subsection]{Theorem}
\newtheorem{corollary}[subsection]{Corollary}
\begin{document}

\maketitle

\begin{abstract}
	There is a strong demand for precise means for the comparison of logics in terms of expressiveness both from theoretical and from application areas. The aim of this paper is to propose a sufficiently general and reasonable formal criterion for expressiveness, so as to apply not only to model-theoretic logics, but also to Tarskian and proof-theoretic logics. For model-theoretic logics there is a standard framework of relative expressiveness, based on the capacity of characterizing structures, and a straightforward formal criterion issuing from it. The problem is that it only allows the comparison of those logics defined within the same class of models. The urge for a broader framework of expressiveness is not new. Nevertheless, the enterprise is complex and a reasonable model-theoretic formal criterion is still wanting. Recently there appeared two criteria in this wider framework, one from García-Matos \& Väänänen and other from L. Kuijer. We argue that they are not adequate. Their limitations are analysed and we propose to move to an even broader framework lacking model-theoretic notions, which we call ``translational expressiveness''. There is already a criterion in this later framework by Mossakowski et al., however it turned out to be too lax. We propose some adequacy criteria for expressiveness and a formal criterion of translational expressiveness complying with them is given.

\end{abstract}

\tableofcontents										

\section{Introduction}

It is very common for those who work with logic to make comparisons such as ``the logic $\L'$ is more expressive than $\L$'', ``$\L'$ is stronger than $\L$'', ``$\L$ is included in $\L'$'', ``$\L$ can be reduced to $\L'$'', etc. Such assertions are often made on imprecise grounds and, though possibly being non-ambiguous and non-problematic, the lack of clarity around the usage of these concepts can generate terminological confusion across the literature (e.g. \cite{Humberstone-BTP}) and harden the comparison of formal results.

In the literature, the notion of logic inclusion or sub-logic (these terms will be used interchangeably here) is pretty much linked with language and axiomatic extensions, which on their turn are linked with ``strength'', that is, the capacity of proving theorems or having valid formulas.  Now the concept of sub-logic is sometimes associated with strength and sometimes associated with expressiveness, and sometimes with both  (e.g. in \cite{Beziau-CNCBEOH}), which is known to be the case of paradoxes \cite{Mossakowski-WILT}. Three kinds of systems are relevant here: model-theoretic logics, Tarskian and proof-theoretic logics, they will now be briefly defined. A logic $\L$ is called \emph{model-theoretic} if it is defined semantically and presented as a sequence ($\F,\M,\vDash$), where $\F$ is a set of formulas, $\M$ is a class of models and $\vDash$ is a satisfaction relation on $\M \times \F$. A logic $\L$ is \emph{Tarskian} if it is defined as ($\F,\vdash$), where $\vdash$ is a consequence relation on $\F$ (possibly multi-consequence). Finally, $\L$ is a proof-theoretic logic if it is defined as  $(\F, \mathcal{R})$, where $\mathcal{R}$ is a set of inference rules.\footnote{Some additional criteria are usually imposed for a  system to qualify as one of these three kinds, but they are immaterial here.}

In model-theoretic logics there is a straightforward approach to expressiveness that is also reasonably taken as a definition of logic inclusion: a logic $\L_2$ is at least as expressive/includes $\L_1$ if every class of structures characterizable in $\L_1$ is also characterizable in $\L_2$ (see e.g. \cite[p. 129]{Lindstrom-OCEL} and \cite{Barwise-Feferman-MTL}). This naturally only holds for logics defined within the same class of structures.  
If one wants also to compare logics defined within different classes of structures, then it does not seem adequate to use the concept of sub-logic, as we shall see below. It is better to use the concept of expressiveness.

There is no straightforward approach to expressiveness for Tarskian and proof-theoretic logics (TPL, for short). As for sub-logic, in TPL it is also linked with language and axiomatic extensions. However, we can often see ``sub-logic'' relations taken in a wider sense, i.e. when, for two given logics $\L$ and $\L'$, it happens that $\L'$ is not a language/axiomatic extension of $\L$, but there is a certain mapping of $\L$-formulas into $\L'$-formulas respecting the consequence relation. These cases are normally interpreted as saying that $\L$ is included\-/embeddable\-/reconstructible\-/interpretable\-/can be simulated in $\L'$. We propose to call these as expressiveness relations whenever they can be seen as modeling the following intuition

\vspace{1mm}
\begin{minipage}{0.1\textwidth}
	$(E)$ 
\end{minipage}
\begin{minipage}{0.8\textwidth}
	For every $\L$-sentence $\phi$, there is an $\L'$-sentence $\psi$ with the same meaning.
\end{minipage}
\vspace{1mm}

This same intuitive explanation of expressiveness holds for model-theoretic logics, and is used as a basis for formal criteria therein (e.g. \cite[p. 42]{Barwise-Feferman-MTL}). Thus we can have a reasonably homogeneous concept for comparing logics: that of expressiveness. We shall reserve the term ``sub-logic" just when there are axiomatic or language extensions, and we shall not use the term ``strength'' because it is ambiguous between  expressive and deductive strength.

A precise definition for the notion of relative expressiveness for model-theoretic logics was given already in the 1970s (e.g. in \cite{Lindstrom-OCEL} and \cite{Barwise-AAMT}). As we said, this definition is based on the capacity of characterizing structures and underlies each of the so-called Lindström-type theorems,\footnote{That is, theorems of the form ``If a logical system $\L'$ is at least as expressive as $\L$ and have properties $P_1,...,P_n$, then $\L'$ is as expressive as $\L$''; see e.g. \cite{Barwise-Feferman-MTL}, \cite{VanBenthem-TenCate-Vaananen-LTFFOL} and  \cite{Otto-Piro-CGFMLGM}.} which form the basis of abstract model theory.

\paragraph*{Single-class expressiveness}	Considering model-theoretic logics defined within the same class of structures, the above intuition can be captured easily since there is a common ground where sentences can be compared. This common ground is easily achieved by defining the meaning of a sentence $\phi$ in a logic $\L =(\F, \M, \vDash_{\L})$ as $\{\U \in \M\,|\, \U \vDash_\L \phi\}$ ($Mod_{\L}(\phi)$, for short). Thus we call this framework \emph{single-class expressiveness}.  Since every sentence in $\L_1$ is mapped to a sentence in $\L_2$ having the same meaning, this framework of expressiveness can be seen as consisting of certain formula-mappings between model-theoretic logics. A formal definition for it is then straightforward. Let $\tau$ be a signature and let $\L_1=(\F_1,\M,\vDash_{\L_1})$ and $\L_2=(\F_2,\M,\vDash_{\L_2})$ be model-theoretic logics.

\begin{definition}[$\expr_{EC}$]
$\L_2$ is at least as expressive as $\L_1$ ($\L_1 \expr_{EC} \L_2$) if and only if (iff, for short) for every $\tau-$sentence $\phi \in \F_1$ there is a $\tau-$sentence $\psi \in \F_2$ such that $Mod_{\L_{1}}(\phi)=Mod_{\L_{2}}(\psi)$.
\end{definition}

Notice that here the class of models $\M$ is the same for both $\L_1$ and $\L_2$, and $\phi,\psi$ share the same non-logical symbols. The above definition can be paraphrased in terms of elementary classes:\footnote{For some signature $\tau$, a class $\mathcal{K}$ of $\tau$-structures is elementary in a logic $\L$ iff there is an $\L$-sentence $\phi$ such that $\mathcal{K}=\{\U \,\,|\,\, \U \vDash_\L \phi\}$. A class $\mathcal{K}$ of $\tau$-structures is a projective class of $\L$ if for some $\tau' \supseteq \tau$ there is an $\L$-elementary $\tau'$-class $\mathcal{K'}$ such that $\mathcal{K}=\{\U' \restr \tau\,\, |\,\, \U' \in \mathcal{K'}\}$, where $\U' \restr \tau$ is the $\tau$-reduct of $\U'$.}  $\L \expr_{EC} \L'$ iff every elementary class of $\L$ is an elementary class of $\L'$.

Despite being the basis for many important results, $\expr_{EC}$ is very limited. It is not only restricted to model-theoretic logics, but it requires the classes of structures being compared to share the same signature. As a consequence, it only allows the comparison of logics defined within the same class of structures. The urge for a broader definition is not new.\footnote{See \cite[p. 358]{Tarlecki-BPTI}, \cite[p. 299]{Meseguer-GL}, \cite[p. 232]{Shapiro-FWF} and \cite[p. 130]{Chang-Keisler-MT}.} A straightforward means of extension already appears in \cite{Barwise-Feferman-MTL} and is examined in \cite{Shapiro-FWF}. Using the notion of projective class, one can loosen the above definition allowing that $\L'$ is at least as expressive as $\L$ iff every elementary class of $\L$ is a projective class in $\L'$ ($\L \expr_{PC} \L'$) (\ibid, p. 232).

Even among those expressiveness results using $\expr_{EC}$, we can notice some flexibility in its application.  One such example appears in \cite{Areces-et.al-EPML}, where the definition of $\expr_{EC}$ above is given, but afterwards (p. 307) it is informally relaxed in order to allow changes of signature, thus the proper definition being used appears to be the one based on projective classes ($\expr_{PC}$). The problem is that elsewhere we get different results depending on whether we use $\expr_{EC}$ or $\expr_{PC}$, as Shapiro showed \cite[p. 232]{Shapiro-FWF}: $\L(Q_0) \not \expr_{EC} \L(A)$ and $\L(A) \not \expr_{EC} \L(Q_0)$, but $\L(Q_0) \expr_{PC} \L(A)$ and $\L(A) \expr_{PC} \L(Q_0)$.\footnote{The logic $\L(Q_0)$ is the first-order logic extended with the quantifier ``there exists infinitely many", and $\L(A)$ is the first-order logic with the ``ancestral'' operation $A$, i.e. $Axy(Rxy)$ says that $x$ is an ancestor of $y$ in the relation $R$.}

Remaining within model-theoretic logics, a wider framework ---let us call it multi-class--- would comprise besides formula-mappings also structure-mappings, thus allowing structures of one logic to be mapped to structures of the other. This would enable the comparison of logics defined within different classes of structures. Recently there appeared two formal definitions of multi-class expressiveness, to wit \cite{Garcia-Matos-Vaananen-AMTFUL} and \cite{Kuijer-PHD}. In the sequence we will present them and argue that they are not adequate.

There have been also early claims outside abstract model-theory relating logics in the sense of (E) above, but no explicit definitions of the main concepts involved were given. Gödel used his result on the interpretation of classical into intuitionistic logic to infer that, contrary to the appearances, it is classical logic that is contained in intuitionistic logic \cite[p. 295]{Godel-1933e}. Since then, there followed many results of interpretations, embeddings, reconstructions,  simulations, etc. among Tarskian and proof-theoretic logics. Such results have often been used to justify some statement of inclusion or relative expressiveness between the logics at issue.\footnote{E.g. \cite[p. 154]{Thomason-RTLML-II}, \cite[p. 67]{Wojcicki-TLC}, \cite[p. 441]{Humberstone-CCL}, \cite[p. 163]{Humberstone-BTP}, \cite[p. 233]{Coniglio-TSNTBL}, \cite[p. 15]{Carnielli-Coniglio-Dottaviano-NDTBL} and the recent \cite[p. 207]{Agudelo-TNCLCL}.}  We proposed to call those with the underlying intuition ($E$) as expressiveness results. Naturally, this notion of expressiveness is no longer directly linked with the capacity of characterizing structures as in model-theoretic logics, rather it resides in the capacity of a logic to ``encode'' another. Let the framework of expressiveness based on such capacity  be named ``translational expressiveness''.\footnote{The term is borrowed from \cite{Peters-PHD}. Curiously, the same kind of problem appeared in computer science: there was a multitude of programming languages and process calculi and many informal claims relating the expressive power of such, through the existence of certain encodings of one into another. This situation fomented a series of works aiming at a standardization of such ``expressibility results'' (e.g. \cite{Felleisen-EPPL}, \cite{Parrow-EPA} and \cite{Gorla-TUAESRPC}). Though aimed at different objects, it is still possible to learn from this enterprise and propose the first steps of a standardization of a definition of relative expressiveness.}

As opposed to the case of model-theoretic logics, until recently there was no attempt to give a precise definition of relative expressiveness in this framework. To the best of our knowledge, Mossakowski et al. \cite{Mossakowski-WILT} were the first to give an explicit formal definition of translational expressiveness for logics, that is, an expressiveness relation  based on the existence of certain kinds of formula-mappings.  We will expose their definition and show that it is still not adequate. Then, some adequacy criteria for expressiveness are proposed and a formal criterion for translational expressiveness is given.

\subsection*{Structure of the paper}

This paper presents the following panorama on relative expressiveness between logics:

\begin{itemize}
	\item[(*)]  Relative expressiveness between logics (intuitive concept as given by (E))
		\begin{enumerate}
			\item[(a)] Adequacy criteria for expressiveness
			\begin{itemize}
				\item[$\rightarrow$] Approaches to (*) hopefully satisfying (a)
				\begin{itemize}[label={\rotatebox[origin=c]{180}{$\Lsh$}}]
					\item single-class
						\begin{itemize}[label={\rotatebox[origin=c]{180}{$\Lsh$}}]
							\item formal proposals: $\expr_{EC}$, $\expr_{PC}$
						\end{itemize}
					\item multi-class
						\begin{itemize}[label={\rotatebox[origin=c]{180}{$\Lsh$}}]
							\item formal proposals: $\expr_{gv}$, $expressiveness_g$
						\end{itemize}
					\item translational
						\begin{itemize}[label={\rotatebox[origin=c]{180}{$\Lsh$}}]
							\item formal proposals: Mossakowski et al.'s and $expressiveness_{gg}$.
						\end{itemize}
				\end{itemize}
			\end{itemize}
		\end{enumerate}
\end{itemize}

	In $\S 2$ the framework of multi-class expressiveness will be presented and two formal criteria will be analysed, one from \cite{Garcia-Matos-Vaananen-AMTFUL} ($\expr_{gv}$) and other from \cite{Kuijer-PHD} ($expressiveness_g$). We argue that, using the intuitive explanation of expressiveness given above, there are counterexamples to both. In the sequence, we investigate what is wrong with them and propose that moving to an even wider framework, encompassing a greater range of logics and lacking structure-mappings, might be promising.

	In $\S 3$ we present Mossakowski et al.'s formal criterion for translational expressiveness and show that, due to a result of \cite{Jerabek-UCT}, it is still not adequate. Then, some basic adequacy criteria for expressiveness will be proposed. In the sequence we analyse some formal conditions related to translations already appearing in the literature and investigate whether they satisfy the adequacy criteria. Finally, a formal sufficient condition for translational expressiveness ($expressiveness_{gg}$) is proposed. We will argue that $expressiveness_{gg}$ satisfies the criteria and is materially adequate.

\section{Multi-class expressiveness}

\subsection{M. García-Matos and J. Väänänen on sub-logic}

García-Matos and Väänänen gave a multi-class definition of sub-logic. Their definition is similar to one given in \cite{Meseguer-GL} but is laxer.\footnote{García-Matos and Väänänen's approach is a non-signature indexed version of the ``map of logics'' in \cite[p. 299]{Meseguer-GL}. In Meseguer's paper, it is not allowed for sub-logic mappings that sentences in the source logic be mapped to theories in the target logic, and the formula-mappings must be injective.}  Seemingly, they treat the term ``sub-logic'' as synonymous with ``expressiveness'' (exchanging the order of terms, naturally), since they present the Lindström theorems as being about sub-logic, whereas they are presented by one of the authors elsewhere as being about expressiveness (e.g. \cite{VanBenthem-TenCate-Vaananen-LTFFOL}). We shall argue that the relation defined must be seen as an expressiveness relation, and it will be shown that as an expressiveness relation, it has important downsides. Let us consider their definition of sub-logic \cite[p. 21]{Garcia-Matos-Vaananen-AMTFUL}:

\begin{definition}\label{GM-V-sublogic}
A logic $\L=(\F, \M, \vDash)$ is a \emph{sub-logic} of $\L'= (\F', \M', \vDash')$ (in symbols $\L \expr_{gv} \L'$) if there are a sentence $\theta \in \F'$ and functions $f: \M' \longrightarrow \M$, $\T: \F \longrightarrow \F'$ such that:
\begin{enumerate}
\item[(a)] For every $\U \in \M$ exists a $\U' \in \M'$ such that $f(\U')=\U$ and $\U' \vDash' \theta$
\item[(b)] For every $\phi \in \F$ and for every $\U' \in \M'$, if $\U' \vDash' \theta$, then ($\U' \vDash' \T(\phi)$ iff $f(\U') \vDash \phi$)
\end{enumerate}
\end{definition}

Thus, if the class of structures $\M'$ of a logic $\L'$ is richer than the class of structures $\M$ of a logic $\L$, one could still allow a comparison between $\L$ and $\L'$, by restricting $\M'$ to the translatable structures, i.e. those $\U'$ which  satisfy some condition $\theta$ and then use a function $f$ to translate this reduced class of $\L'$-structures into $\L$-structures.

\subsubsection{A problem with $\expr_{gv}$}

Let $\L=(\F, \mathcal{M}, \vDash)$ be a trivial propositional logic in some given signature, and let ($\mathfrak{M}, v$) be the set of is truth tables together with a valuation. Let $\L'=(\F', \mathcal{M'}, \vDash')$ be any logic that has at least one valid sentence $\delta$ and let the formula $\theta$ of the definition above be such $\delta$. Define the following mappings 

\begin{itemize}
	\item $f: \mathcal{M'} \longrightarrow \mathcal{M}$. For every $\U' \in \mathcal{M'}$, $f(\U')= (\mathfrak{M},v)$.
	\item $\T: \F \longrightarrow \F'$. For every $\phi \in \F$, $\T(\phi) = \delta$.
\end{itemize}
Then it is easily seen that both items (a) and (b) above are satisfied.

Thus, according to this definition of sub-logic, every logic containing at least one valid formula has a trivial sub-logic. If we think on the usual meaning given to ``sub-logic'', this not plausible at all, since the logic $(\F', \mathcal{M'}, \vDash')$ could be non-trivial and might even lack a trivializing particle, so how come it could have a trivial sub-logic?\footnote{This counter-example was based on another one given in \cite[p. 385-6]{Coniglio-Carnielli-TBLA}, which was given as an argument for strengthening the notion of translation used.} 

It is not enough to require that the mapping $\T$ be injective. Using an idea of \cite[p. 14]{Carnielli-Coniglio-Dottaviano-NDTBL}, take for target logic any $\L^*=(\F^*, \mathcal{M}^*, \vDash^*)$ that has a denumerable number of valid formulas $\delta_1, \delta_2, ...$ and define the mapping from the formulas of the trivial logic $\F=\{\phi_1, \phi_2, ...\}$ to $\L^*$-formulas as $\T(\phi_i)= \delta_i$. Still we have that $\L^*$ has a trivial sub-logic, once more, $\L^*$ may be any logic with a denumerable number of validities, also lacking a trivializing particle.

Naturally, the usual senses of logic inclusion, that is, through language or axiomatic extensions do not apply here. The only way to make sense of this is to interpret the above cases as saying that a trivial logic can be \emph{simulated} in any logic containing at least one validity. This capacity of simulating a logic is an expressive capacity, therefore the definition above is better seen as a definition of expressiveness. Yet, as an expressiveness relation, it is noteworthy that no restriction on the translation functions $f$ and $\T$ are imposed, so one may wonder whether the definition over-generates.

We are not in position to settle definitively this question. However we will give a plausibility argument to the effect that we should impose stricter conditions on model- and formula-mappings, since there is a natural and reasonable extension of the above definition that indeed over-generates. Though not, strictly speaking, a counter-example, the case to be presented below shall give evidence that there is an intrinsic problem with the above proposal for multi-class expressiveness.


As we said, the sentence $\theta$ on the above definition of $\expr_{gv}$ is intended to cut $\L'$-structures that are meaningless from the point of view of $\L$. Apparently, it would do no harm to the idea behind $\expr_{gv}$ to allow $\theta$ to be a recursive set of sentences, as it is normally done in works dealing with translations of logics and conversion of structures (e.g. \cite[p. 270]{Manzano-EFOL}). This would be useful if the logics at issue have no conjunction, so that $\theta$ could be a finite set of sentences; or if the low expressive power of the logics $\L$ and $\L'$ makes that the $\L'$-structures to be reduced into $\L$-structures be only characterizable through an infinite but recursive set of $\L'$-sentences. This happens in the case of many-sorted logic ($\mathcal{MSL}$) and $\FOL$. If $\theta$ is not allowed to be an infinite set of sentences, then $\mathcal{MSL}$ would not be a sub-logic, in the above sense, of $\FOL$, which is implausible. Though the conversion of $\FOL$-structures into $\MSL$-structures is mentioned \cite[p. 23]{Garcia-Matos-Vaananen-AMTFUL}, the case of a given $\FOL$-signature $\tau$ containing infinitely-many unary symbols $S_1,S_2,...$ is not considered. To convert  $\tau$-structures into $\mathcal{MSL}$-structures then one needs to make sure that unary predicates $S_1,S_2,..$ to be converted to many-sorted domains are non-empty. This would only be accomplished by setting $\theta=\{\exists x S_1(x), \exists x S_2(x), ...\}$ \cite[p. 260]{Manzano-EFOL}.

However, if one allows such modification another implausible situation occurs. Consider the classical propositional logic ($\CPL$) and a propositional logic $\WPL$, defined by Béziau \cite{Beziau-CNCBEOH}. $\WPL$ shares all the definitions of the classical propositional connectives, except for negation, where it has only one ``half'' of its clause: for a $\WPL$-model $M$ and formula $\phi$, if $M(\phi)= T$, then $M(¬\phi)=F$; the converse direction does not hold.

Béziau shows that there is a translation from $\CPL$ into $\WPL$. Below we will give Mossakowski et al.'s presentation of it, which includes also a model translation \cite[p. 107]{Mossakowski-WILT}. Given an n-ary connective $\#$, a translation $\T$ is literal for $\#$ if $\T(\#(\phi_1,...,\phi_n))= \#(\T(\phi_1),...,\T(\phi_n))$; for an atomic formula $p$, $\T$ is literal when $\T(p)=p$. Define the mapping ($\T,f): \CPL \longrightarrow \WPL$ as follows:

\begin{itemize}
	\item $\T: \F^{\CPL} \longrightarrow \F^{\WPL}$
	\begin{itemize}
		\item $\T(¬\phi)= \T(\phi) \rightarrow ¬(\T(\phi))$,
		\item literal for $\land,\lor,\rightarrow$ and atomic formulas;
	\end{itemize}

	\item and $f: \M^{\WPL} \longrightarrow \M^{\CPL}$ 

	\begin{itemize}
		\item $f(\mathfrak{M}^{\WPL},v) = (\mathfrak{M}^{\CPL},v)$,
	\end{itemize}
\end{itemize}
where $\mathfrak{M}$ comprises the truth-tables for each connective and $v$ a valuation on the propositional variables. Notice that $f$ takes a $\WPL$-model, keeps the valuation $v$ and replaces the truth-tables for the corresponding $\CPL$ ones.

Then we have that

\begin{theorem}[Mossakowski et al.]$ $\\

	$f(\mathfrak{M}^{\WPL},v) \vDash_{\CPL} \phi$ if and only if $(\mathfrak{M}^{\WPL},v) \vDash_{\WPL} \T(\phi)$.
\end{theorem}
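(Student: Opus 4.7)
The plan is to proceed by structural induction on $\phi$, exploiting two facts: $\T$ is literal for atomic formulas and for $\land,\lor,\to$, and $f$ preserves the valuation $v$ while swapping the $\WPL$ truth-tables for classical ones (which for $\land,\lor,\to$ already agree). So the only genuinely non-routine case is negation.

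First I would dispatch the base case: if $\phi$ is atomic then $\T(\phi)=\phi$, and both $f(\mathfrak{M}^{\WPL},v)$ and $(\mathfrak{M}^{\WPL},v)$ evaluate $\phi$ via the same $v$, so the biconditional is immediate. For the inductive step on $\land,\lor,\to$, since $\T$ is literal for these connectives and the $\WPL$ and $\CPL$ truth-tables for them coincide, the equivalence follows directly from the inductive hypothesis applied to the immediate subformulas.

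The central case is $\phi = \neg\psi$, where $\T(\neg\psi) = \T(\psi)\to \neg\T(\psi)$. I would argue by cases on the $\WPL$-value of $\T(\psi)$. If $(\mathfrak{M}^{\WPL},v)\vDash_{\WPL}\T(\psi)$, then by the one direction the $\WPL$ semantics does give us, $(\mathfrak{M}^{\WPL},v)\not\vDash_{\WPL}\neg\T(\psi)$, so the implication $\T(\psi)\to\neg\T(\psi)$ fails; by the inductive hypothesis $f(\mathfrak{M}^{\WPL},v)\vDash_{\CPL}\psi$, hence $f(\mathfrak{M}^{\WPL},v)\not\vDash_{\CPL}\neg\psi$. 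If instead $(\mathfrak{M}^{\WPL},v)\not\vDash_{\WPL}\T(\psi)$, the implication is vacuously true, while by the inductive hypothesis $f(\mathfrak{M}^{\WPL},v)\not\vDash_{\CPL}\psi$, hence $f(\mathfrak{M}^{\WPL},v)\vDash_{\CPL}\neg\psi$. Either way the two sides of the biconditional agree.

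The main obstacle (really the only subtle point) is reading the $\WPL$ clause for negation correctly. $\WPL$ only asserts the implication $M(\phi)=T\Rightarrow M(\neg\phi)=F$; its contrapositive $M(\neg\phi)=T\Rightarrow M(\phi)=F$ is exactly what makes the translated schema $\T(\psi)\to\neg\T(\psi)$ behave like classical $\neg\psi$, even though $\WPL$ itself has a strictly weaker negation. Once this half-clause is used in the first subcase above, the rest is bookkeeping, and the invariance of the other connectives under $f$ removes any further surprise.
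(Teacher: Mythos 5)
Your proof is correct: the paper states this theorem without proof (citing Mossakowski et al.), and your structural induction---trivial for atoms and the literal connectives, with the case split on the $\WPL$-value of $\T(\psi)$ handling $\neg$---is exactly the standard argument, correctly using only the one half-clause $M(\chi)=T\Rightarrow M(\neg\chi)=F$ that $\WPL$ guarantees (the second subcase needs no information about $\neg\T(\psi)$ at all, which is precisely why the underdetermined $\WPL$ negation causes no trouble). The only nitpick is the closing remark attributing the work to the ``contrapositive'' of that clause, when your case analysis in fact uses the clause directly; this is cosmetic and does not affect the argument.
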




	


The model mapping $f$ is surjective, so that it obeys (a) above.

Now Mossakowski et al. (\ibid, p. 100) define a mapping also from $\WPL$ to $\CPL$ using an auxiliary set of formulas $\Delta$ constructed out of $\CPL$-formulas.

Define the mapping $(\T',f', \Delta): \WPL \longrightarrow \CPL$ as follows:
\vspace{2mm}

\begin{itemize}
	\item $\T':\F^{\WPL} \longrightarrow \F^{\CPL}$
		\begin{itemize}
			\item For every $\phi \in \F^{\WPL}$, $\T'(\phi)=p_\phi$, where $p_\phi$ is a propositional variable.
		\end{itemize}
\end{itemize}

Define $\Delta$ as the following set of formulas, for $\phi, \psi \in \F^{\WPL}$:

\begin{multicols}{2}
	\begin{itemize}
		\item $\T'(\phi \land \psi) \leftrightarrow \T'(\phi) \land \T'(\psi)$
		\item $\T'(\phi \lor \psi) \leftrightarrow \T'(\phi) \lor \T'(\psi)$
		\item $\T'(\phi \rightarrow \psi) \leftrightarrow \T'(\phi) \rightarrow \T'(\psi)$
		\item $\T'(\phi) \rightarrow ¬\T'(¬\phi)$.
	\end{itemize}
\end{multicols}

	The purpose of $\Delta$ is to encode the semantics of $\WPL$ into the propositional variables $\{p_1,p_2,...\}$, since every $\WPL$-formula is translated into one of such $p_i$, in a $\CPL$-model satisfying $\Delta$ the valuation of the propositional variables $p_i$ is forced to respect the semantics of $\WPL$. For example, in $\WPL$, if $(\mathfrak{M}^{\WPL},v) \vDash_\WPL r$, then it holds that $(\mathfrak{M}^{\WPL},v) \not \vDash_\WPL ¬r$, but the converse direction does not hold.  This is simulated in the $\CPL$-models satisfying $\Delta$ by the fourth clause above: if $(\mathfrak{M}^{\CPL},v) \vDash_\CPL p_r$, then $(\mathfrak{M}^{\CPL},v) \vDash_\CPL ¬p_{¬r}$ which implies that $(\mathfrak{M}^{\CPL},v) \not \vDash_\CPL p_{¬r}$. But, as in $\WPL$, it does not hold that if $(\mathfrak{M}^{\CPL},v) \not \vDash_\CPL p_{¬r}$, then $(\mathfrak{M}^{\CPL},v) \vDash_\CPL p_{r}$.

Now define the model-translation $f': \M^{\CPL} \longrightarrow \M^{\WPL}$:
\begin{itemize}

	\item Let  $(\mathfrak{M},v)$ be a $\CPL$-model satisfying $\Delta$. Then $f'(\mathfrak{M},v)$ is defined as follows:
		\begin{itemize}
		\item For every $\WPL$-formula $\phi$, $f'(\mathfrak{M},v) \vDash_\WPL \phi$ iff $(\mathfrak{M},v) \vDash_\CPL \T'(\phi)$.
		\end{itemize}

\end{itemize}

$f'$ is also surjective (so it obeys (a) in the criterion for sub-logic above). Then we have that 

\begin{theorem}[Mossakowski et al.]$ $\\

	$f'(\mathfrak{M}^{\CPL},v) \vDash_{\WPL} \phi$ iff $(\mathfrak{M}^{\CPL},v) \vDash_{\CPL} \Delta$ and $(\mathfrak{M}^{\CPL},v) \vDash_{\CPL} \T'(\phi)$.
\end{theorem}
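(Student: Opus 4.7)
The statement really bundles two claims: (i) the recipe given for $f'$ actually specifies a legitimate $\WPL$-model whenever $(\mathfrak{M},v) \vDash_\CPL \Delta$, and (ii) on such models, satisfaction of $\phi$ in $f'(\mathfrak{M},v)$ is equivalent to satisfaction of $\T'(\phi)$ in $(\mathfrak{M},v)$. My plan is to dispatch the two components separately, handling well-definedness first, since the biconditional then falls out by construction.

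For well-definedness I would check, connective by connective, that the stipulation ``$f'(\mathfrak{M},v) \vDash_\WPL \phi$ iff $(\mathfrak{M},v) \vDash_\CPL \T'(\phi)$'' respects the $\WPL$ truth conditions. For $\land$ the calculation runs: $f'(\mathfrak{M},v) \vDash_\WPL \phi \land \psi$ iff $(\mathfrak{M},v) \vDash_\CPL p_{\phi \land \psi}$; by the first clause of $\Delta$ this is equivalent to $(\mathfrak{M},v) \vDash_\CPL p_\phi \land p_\psi$, hence to $f'(\mathfrak{M},v) \vDash_\WPL \phi$ and $f'(\mathfrak{M},v) \vDash_\WPL \psi$. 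The cases of $\lor$ and $\rightarrow$ are entirely analogous, using the second and third clauses of $\Delta$ respectively.

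The genuinely delicate case, and the one I expect to be the main obstacle, is negation, because $\WPL$'s clause is only a half-implication: one must show that if $f'(\mathfrak{M},v) \vDash_\WPL \phi$ then $f'(\mathfrak{M},v) \not\vDash_\WPL \neg\phi$, with no requirement in the converse direction. Assuming $f'(\mathfrak{M},v) \vDash_\WPL \phi$, i.e.\ $(\mathfrak{M},v) \vDash_\CPL p_\phi$, the fourth clause $p_\phi \rightarrow \neg p_{\neg\phi}$ of $\Delta$ gives $(\mathfrak{M},v) \vDash_\CPL \neg p_{\neg\phi}$, whence $(\mathfrak{M},v) \not\vDash_\CPL p_{\neg\phi}$ and therefore $f'(\mathfrak{M},v) \not\vDash_\WPL \neg\phi$, as required. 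This is the only step that genuinely exploits the weakness of $\WPL$-negation and the only step for which the fourth clause of $\Delta$ is indispensable; that the converse implication is \emph{not} derivable corresponds exactly to the fact that $\WPL$-negation is strictly weaker than classical negation.

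Once well-definedness is established, the biconditional of the theorem is immediate. The left-to-right direction presupposes that $f'(\mathfrak{M},v)$ exists, which in turn requires $(\mathfrak{M},v) \vDash_\CPL \Delta$; granted that, the defining equivalence at once yields $(\mathfrak{M},v) \vDash_\CPL \T'(\phi)$. The right-to-left direction is symmetric: if both conjuncts on the right hold, then $f'$ is defined at $(\mathfrak{M},v)$ and the same defining clause delivers $f'(\mathfrak{M},v) \vDash_\WPL \phi$. (Note that the surjectivity of $f'$ onto $\M^{\WPL}$, needed for clause (a) of $\expr_{gv}$, is a distinct claim and does not enter into the present theorem.)
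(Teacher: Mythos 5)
The paper states this result without proof (it is simply imported from Mossakowski et al.), so there is no in-paper argument to compare against; your verification is correct and is the natural one. The decomposition into (i) well-definedness of $f'$ as a genuine $\WPL$-model and (ii) the biconditional falling out of the defining clause is sound, and you rightly isolate negation as the only non-routine case: since a $\WPL$-model need only satisfy the half-clause ``if $M(\phi)=T$ then $M(\lnot\phi)=F$'', the fourth clause of $\Delta$ gives exactly the one implication that must be checked, and the non-derivability of the converse is precisely what lets $f'(\mathfrak{M},v)$ qualify as a $\WPL$-model rather than collapsing to classical negation. Your closing remark on the partiality of $f'$ (defined only on models of $\Delta$, with the left-hand side of the theorem read as false/undefined otherwise) is the right way to make the stated biconditional literally true, and correctly separates surjectivity of $f'$ as an independent claim.
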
	

Therefore, by the above results and according to the extended definition of sub-logic, we would have that $\WPL$ and $\CPL$ are one sub-logic of another, which is not plausible. $\CPL$ is not a sub-logic of $\WPL$ in the sense of language/axiomatic extension. Neither they are expressively equivalent, using $(E)$ above, since the ``half-negation'' present in $\WPL$ is not available in $\CPL$. 

The problem is that the translation from $\WPL$ to $\CPL$ uses a trick to sneak in the semantics of $\WPL$ into $\Delta$. Restricting the $\CPL$-models that satisfy $\Delta$, one simulates the behaviour of $\WPL$-formulas in the propositional variables $p_i$ and sustain such behaviour through the model-translation.

The modified version of $\expr_{gv}$, allowing $\theta$ to be a recursive set of sentences looks at least as ``natural'' as the original one. Even considering the original definition  \ref{GM-V-sublogic} we can see that there is something wrong with it, in not requiring any kind of preservation of the structure of formulas e.g. by forcing $\T$ to be inductively defined through the formation of formulas. Then one may conjecture that, among more expressive logics, there be translations ($\T,f$) where $\T$ maps entire formulas $\phi$ to propositional variables $p_\phi$ and, with \emph{a sentence} $\theta$ restricting the target structures, $f$ is able to mimic the semantic behavior of $\phi$. Then it is very doubtful that the obtained $p_\phi$ would have the same meaning as $\phi$.

Thus, we think we have good reasons to consider that García-Matos and Väänänen's definition of sub-logic is not adequate. It would certainly be better to use a stronger notion of translation, paying attention to the structure of formulas. Only then the meaning of the target-formulas could be said to match the meaning of the source-formulas. Below we will see that a development along this line appeared in the literature.  Nevertheless, there is still a structure-attentive translation that ``cheats'' similarly as the one above,  mimicking the semantics of one logic into the other.

\subsection{L. Kuijer on multi-class expressiveness}

In his doctorate thesis \cite{Kuijer-PHD} Kuijer studies the expressiveness of various logics of knowledge and action, these logics are taken in the model-theoretic sense.  He notices that there are some results relating  logics similarly as in single-class expressiveness.\footnote{The referred results are: \cite{Thomason-RTLML-I}, \cite{Gasquet-Herzig-FCNML}, \cite{Goranko-Jamroga-CSLMAS}, \cite{Broersen-Herzig-Troquard-2006a} and \cite{Broersen-Herzig-Troquard-2006b}.} These works were selected as prototypical for a criterion in the wider framework of multi-class expressiveness.

The purpose is to investigate features shared by all the results and construct a criterion, to be called ``$expressiveness_g$'', based on these features.  Similarly with the work of García-Matos and Väänänen exposed above, these prototypes involve translations of sentences and translations of structures. So a translation from $\L_1$ to $\L_2$ is a pair ($\T,f$), with $\T: \F_1 \rightarrow \F_2$ and $f: \M_1 \rightarrow \M_2$ or $f: \M_2 \rightarrow \M_1$, such that $(\T,f)$ satisfies some given conditions. 

A first plausible condition is that ($\T,f$) must preserve and respect truth:
\begin{definition}[Truth preserving]
A translation $(\T,f): \L_1 \rightarrow \L_2$  with  $\T: \F_1 \rightarrow \F_2$ and $f: \M_1 \rightarrow \M_2$ is truth preserving if, for every $\phi \in \F_1$ and $\U \in \M_1$
\begin{center} $\U \vDash_{\L_1} \phi$ if and only if  $f(\U) \vDash_{\L_2} \T(\phi)$. \end{center}
\end{definition}
Then a tentative definition of $expressiveness_g$ could be 
\begin{quote}
$\L_2$ is at least $expressive_g$ as $\L_1$ iff there is a $(\T,f): \L_1 \rightarrow \L_2$ that is truth preserving.
\end{quote}
The problem is that the requirement of truth preservation is very weak, indeed there are several trivial truth-preserving translations among almost every logic. Kuijer gives the following example \cite[p. 88]{Kuijer-PHD}.

\subsubsection{A trivial translation}

Let $\L_1 = (\F_1, \M_1, \vDash_{\L_1})$ be any logic on possible world semantics such that $\F_1$ is countable and let $\L_2=(\F_2, \M_2, \vDash_{\L_2})$ be a logic where $\F_2$ is a countable set of propositional variables but with no connectives and where $\M_2$ is a class of models with possible worlds. Thus, every $\U' \in \M_2$ is a set of possible worlds with a valuation.

Define a truth-preserving translation ($\T_t, f_t$) from $\L_1$ to $\L_2$ in the following way: map every $\phi \in \F_1$ to a propositional variable $p_\phi \in \F_2$, $f_t$ maps a model $\U \in \M_1$ to a model $\U' \in \M_2$ taking the set of possible worlds of $\U$ and removing every other structure, and with the following valuation $v(p_\phi)= \{ w \in \U \,\,|\,\, (\U, w) \vDash_{\L_1} \phi\}$. Then clearly, by definition, $(\T_t,f_t): \L_1 \longrightarrow \L_2$ is a truth preserving translation.  

\subsubsection{Defining $expressiveness_g$}

Since $\L_1$ in the above example is an arbitrary logic on possible world models, if truth preservation were the only condition for multi-class expressiveness, $\L_2$ would be at least as expressive as $\L_1$, which is absurd, given that $\L_2$ has scarce expressive means. Nevertheless, truth-preservation is clearly a necessary condition. Thus, one must find other features $P_1,...,P_n$ a translation must satisfy in order to serve as a formal elucidation of the notion of multi-class expressiveness. 

Another immediate criterion that comes to mind in order to avoid the trivial translations is to require the preservation of validities and entailment relations. However, some of the chosen prototypical translations do not preserve validity and some do not preserve entailment. Since the idea was to capture the essential features shared by all prototypical translations in  $expressiveness_g$, none of these can be imposed as a necessary condition.

Kuijer then goes through a number of tentative criteria, e.g. preservation of atomic formulas, of sub-formulas, etc., and shows that they are either too lax or too restrictive. Among the lax criteria, that is, the ones that are satisfied by some trivial translation, is one that Kuijer considers nonetheless important, the criterion of being model based:
\begin{definition}[Model based]
A translation $(\T,f)$ is model based if there are two functions $f_1,f_2$ such that, for all $(\mathfrak{M},w) \in \M_1$, we have that $f(\mathfrak{M},w)= (f_1(\mathfrak{M}),f_2(\mathfrak{M},w))$.
\end{definition}
A model based translation would force $f$ to preserve some structure of $\mathfrak{M}$ and  prevent that the pointed models $(\mathfrak{M},w)$ and $(\mathfrak{M},w')$ be translated to completely unrelated models.
 
Finally, the condition that apparently divides the good from bad translations and gives a reasonable notion of multi-class expressiveness is the criterion of being finitely generated.  For the sake of simplicity, some aspects of the definition below are not completely formalized.\footnote{For the complete formal definition, the reader may consult \cite[p. 115]{Kuijer-PHD}.}  Let $\F$ be a set of formulas generated by a set $\mathcal{P}$ of propositional variables and a set $\mathcal{C}$ of connectives. Let $X=\{x_1,x_2,...\}$ be a set of variables with $\mathcal{P} \cap X = \emptyset$, and let $\F^{X}$ be the set of formulas generated by $\mathcal{P} \cup \{x_1,x_2,...\}$ with the connectives $\mathcal{C}$. Then we have (\ibid, p. 115):

\begin{definition}[Finitely Generated]
Let $\L_1$ and $\L_2$ be such that $\F_i$ is generated by a set $\mathcal{P}_i$ of propositional variables and a finite set  $\mathcal{C}_i$ of connectives, for $i \in \{1,2\}$. Let $\phi^X \in \F^{X}_1$, then a translation $(\T,f): \L_1 \rightarrow \L_2$ is \emph{finitely generated} if $\T$ can be inductively defined by a finite number of clauses of the form
\begin{center}     $\T(\phi^X)= \psi^X$ for $(x_1,...,x_n) \in \Psi$ \end{center}
where $\psi^X$ is an $\F^{X}_2$-sentence constructed out of $x_1,...,x_n$ and possibly containing $\T(x_i)$, for $x_i \in \F^{X}_1$; and where $\Psi$ is the range of  the $x_i$, e.g. if a given $x_i$ is to be replaced by a formula or only by an atomic formula.
\end{definition}

The set $X$ contains the special propositional variables to be used in the translation clauses, for which one can substitute formulas. An example of such a translation clause is: $\T(x_1 \rightarrow x_2) = ¬(\T(x_1) \land ¬\T(x_2))$ for $(x_1,x_2) \in\,\, \F_1 \times \F_1$; and $\T(x_1)=x_1$ for $x_1 \in \mathcal{P}$.

The idea is that (\ibid, p. 110) it is the fact of being inductively defined and thus respecting (some) of the structure of the formulas that sets the finitely generated translations apart from the trivial translations. Thus Kuijer concludes that the truth-preserving  translations giving rise to an expressiveness relation could be characterized as the ones being finitely generated and model-based. Therefore, the final criterion given for multi-class expressiveness is (\ibid, p. 111)

\begin{definition}[Expressiveness$_g$]
Let $\L_1$ and $\L_2$ be such that $\F_i$  is generated by a set $\mathcal{P}_i$ of propositional variables and a finite set  $\mathcal{C}_i$ of connectives for $i \in \{1,2\}$.

Then $\L_2$ is at least as $expressive_g$ as  $\L_1$ iff there is a translation $(\T,f)$ from $\L_1$ to $\L_2$ that is model based, finitely generated and truth preserving.
\end{definition} 

\subsubsection{A problem with $expressiveness_g$}
	\label{problem-expressiveness-g}

Kuijer had no pretensions that his multi-class definition were to be \emph{the} generalization of expressiveness as given by the single-class framework. The aim was to find only \emph{a} ``reasonable generalization'' (\ibid, p. 83). While keeping this in mind, we would like to argue that his proposal is still not good enough as a criterion for multi-class expressiveness. This is because one can find a pair of logics $\L$, $\L'$ such that $\L'$ is intuitively \emph{more} expressive than $\L$, although  $\L$ is at least as $expressive_g$ as $\L'$.

The logics at issue are Epstein's relatedness logic ($\R$) \cite[p. 80]{Epstein-SFL} and classical propositional logic ($\CPL$). The logic $\R$ besides the truth-functional connectives, has a relevant implication ``$\rightarrow$", which is the reason it is intuitively more expressive than $\CPL$, which lack such a connective.  The referred translation would imply that $\CPL$ is at least as $expressive_g$ as $\R$.

Despite the circumscribed character of Kuijer's criterion, we think that a reasonable generalization of single-class expressiveness should be able to deal with a reasonable amount of logics, not only with a handful of them. Particularly when the logics at issue are in the literature, and have not been constructed in an ad-hoc fashion just to give a counter-example. Finally, there is nothing specific about the logics appearing in the counter-example, so it is quite possible that there are also modal counter-examples.

Epstein presents $\R$ with the connectives $¬,\land, \rightarrow$. The first two are defined as usual and the underlying idea for interpreting the relevant implication symbol ``$\rightarrow$" is as follows. It holds that $p \rightarrow q$ whenever $p$ materially implies $q$ and both are subject-matter related to each other through a relation $\mathscr{R}$  defined on all propositional variables. Specifically, for propositional variables $p_i,p_j$ and $\R$-sentences $\phi$ and $\psi$, $\mathscr{R}(\phi,\psi)$ holds if and only if for some $p_i$ occurring in $\phi$, and $p_j$ occurring in $\psi$, it holds that $\mathscr{R}(p_i,p_j)$. Thus, the truth table for ``$\rightarrow$" is the one for material implication with an additional column for $\mathscr{R}$, so that if $\mathscr{R}(\phi,\psi)$ holds and $¬(\phi \land ¬\psi)$ is true, then $\phi \rightarrow \psi$ is true; else, if $\mathscr{R}(\phi,\psi)$ does not hold, then $\phi \rightarrow \psi$ is false.

 Let $\tau= \{p_0,p_1,..., ¬, \rightarrow, \land\}$ be a signature for $\R$. An $\R$-model ($\mathfrak{M},\mathscr{R},v$) is formed by the truth-tables for $\land, ¬, \rightarrow$, a symmetric and reflexive relation $\mathscr{R}$ on $\tau$-formulas and a valuation $v$.  For propositional variables $d_{i,j}$, let $\tau^+ = \{p_1,p_2,...\} \cup \{d_{i,j} \,\,|\,\, i,j \in \mathbb{N}\} \cup \{¬,\land, \supset\}$. Let $\CPL$ be defined on $\tau^+$ (note we use $\supset$ here to emphasize that it is a material implication).\footnote{The use of new propositional variables is for the sake of simplicity, as we could arrange the $p_1,p_2,...$ in $\CPL$ so as to assign some of the $p_i$s the role of such $d_{i,j}$.}

 We will see below that there is a truth-preserving, model-based and finitely generated translation $(\T^E, f^E): \R \longrightarrow \CPL$. The mapping $\T^E$ is defined as follows:\footnote{The mapping presented was adapted from (\ibid, p. 299). It was given a simpler form which makes the proof of the theorem below straightforward.  We refer to Epstein's mapping as $\T^{E^*}$, which is identical with $\T^{E}$ except for $\rightarrow$, where 

\vspace{1mm}
 $\T^{E^*}(\phi \rightarrow \psi)=$\\

 $(\T^{E^*}(\phi) \supset \T^{E^*}(\psi)) \land [ (\underset{p_i \text{ in } \phi,\, p_j \text{ in } \psi}{\bigvee} d_{i,j}) \,\,\lor\,  (\underset{p_n \text{ in } \phi,\, p_n \text{ in } \psi}{\bigvee} (d_{n,n} \lor ¬ d_{n,n}))]$. 
 \vspace{1mm}

 Notice that our mapping $\T^E$ below is only truth-preserving while Epstein's $\T^{E^*}$ is also validity-preserving, as e.g. $\T^E(p \rightarrow p) = (p \supset p) \land d_{p,p}$ and $\T^{E^*}(p \rightarrow p)= (p \supset p) \land [d_{p,p} \lor (d_{p,p} \lor ¬d_{p,p})]$.}

\begin{itemize}
	\item $\T^E(\phi \rightarrow \psi) = (\T^E(\phi) \supset \T^E(\psi)) \land d_{\phi,\psi}$
	\item literal for $¬,\land$ and atomic formulas.\footnote{Kuijer requires also  that no propositional variable occurs outside the scope of a translation function, so for atomic formulas  one should use  additional functions $s: \mathcal{P} \longrightarrow \mathcal{P}$. Thus we can take the identity function as such $s$.}

\end{itemize}

Here the basic idea for the translation of $\phi \rightarrow \psi$ comes from the definition of ``$\rightarrow$'': $\phi$ materially implies $\psi$ and both formulas are related through $\mathscr{R}$. As the translation is defined inductively through the formation of formulas by a finite number of clauses, it is \emph{finitely generated}.

Now, from an $\R$-model ($\mathfrak{M}, \mathscr{R}, v$), one easily defines a transformation $f^E$ from $\R$-models to $\CPL$-models. Let $f^E(\mathfrak{M}, \mathscr{R}, v)= (\mathfrak{M}^*, v^*)$, where, for $\mathfrak{M}^*$ take all the truth-tables  in $\mathfrak{M}$, excluding the one for $\rightarrow$. Define $v^{*}$ as follows (adapted from \cite[p. 300]{Epstein-SFL}):

\begin{itemize}
	\item $v^{*}(p_i)= v(p_i)$;
	\item $v^*(d_{\phi,\psi}) = T$ iff $\mathscr{R}(\phi,\psi)$ holds.
\end{itemize}
Clearly $f^E$ is \emph{model-based}. 

Both $\CPL$ and $\R$ satisfy a semantic deduction theorem (\ibid, p. 299). To prove that $(\T^E,f^E)$ is {truth-preserving}, one has to prove only that, for an arbitrary $\R$-model $(\mathfrak{M}, \mathscr{R},v)$, it holds that

\begin{theorem}[adapted from Epstein]
$ $ \vspace{2mm}

$(\mathfrak{M}, \mathscr{R},v) \vDash_\R \phi$ if and only if $f^E(\mathfrak{M}, \mathscr{R},v) \vDash_\CPL \T^E(\phi)$.
\end{theorem}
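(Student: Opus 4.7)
The plan is to proceed by structural induction on the complexity of the $\R$-formula $\phi$, using the inductive definition of $\T^{E}$ and the way $f^{E}$ was set up to mirror, on the $\CPL$ side, exactly the information that the $\R$-semantics needs. Since the clauses of $\T^{E}$ handle each connective separately, the induction will follow the same case split.

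For the base case I would take $\phi$ to be a propositional variable $p_i$. Here $\T^{E}(p_i)=p_i$ and by the definition of $f^{E}$ we have $v^{*}(p_i)=v(p_i)$. Hence $(\mathfrak{M},\mathscr{R},v)\vDash_{\R} p_i$ iff $v(p_i)=T$ iff $v^{*}(p_i)=T$ iff $f^{E}(\mathfrak{M},\mathscr{R},v)\vDash_{\CPL}\T^{E}(p_i)$. The inductive cases for $\neg$ and $\land$ are immediate from the inductive hypothesis together with the fact that $f^{E}$ keeps the classical truth-tables for these connectives (and that $\T^{E}$ is literal on them): for instance, $(\mathfrak{M},\mathscr{R},v)\vDash_{\R}\neg\phi$ iff $(\mathfrak{M},\mathscr{R},v)\not\vDash_{\R}\phi$ iff, by IH, $f^{E}(\mathfrak{M},\mathscr{R},v)\not\vDash_{\CPL}\T^{E}(\phi)$ iff $f^{E}(\mathfrak{M},\mathscr{R},v)\vDash_{\CPL}\neg\T^{E}(\phi)=\T^{E}(\neg\phi)$, and similarly for conjunction.

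The only case requiring a bit of care is $\phi$ of the form $\alpha\rightarrow\beta$. By the semantics of the relevant implication, $(\mathfrak{M},\mathscr{R},v)\vDash_{\R}\alpha\rightarrow\beta$ holds iff (i) $\mathscr{R}(\alpha,\beta)$ holds and (ii) the material implication $\alpha\supset\beta$ is true in the underlying valuation. By the inductive hypothesis, condition (ii) is equivalent to $f^{E}(\mathfrak{M},\mathscr{R},v)\vDash_{\CPL}\T^{E}(\alpha)\supset\T^{E}(\beta)$ (using that the classical $\supset$ is definable from the truth-tables for $\neg,\land$ preserved by $f^{E}$, or alternatively treating $\supset$ as primitive and handled as another literal case). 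Condition (i) is equivalent, by the definition of $v^{*}$, to $v^{*}(d_{\alpha,\beta})=T$, i.e. to $f^{E}(\mathfrak{M},\mathscr{R},v)\vDash_{\CPL}d_{\alpha,\beta}$. Conjoining, this is precisely $f^{E}(\mathfrak{M},\mathscr{R},v)\vDash_{\CPL}(\T^{E}(\alpha)\supset\T^{E}(\beta))\land d_{\alpha,\beta}=\T^{E}(\alpha\rightarrow\beta)$.

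There is no real obstacle here; the whole point of the translation is that $\T^{E}$ internalises the two independent ingredients of relevant implication (material implication plus subject-matter relatedness) as a conjunction, while $f^{E}$ was deliberately crafted to read off the relatedness relation $\mathscr{R}$ into fresh propositional variables $d_{i,j}$. The mild subtlety is making sure the reserved variables $d_{\phi,\psi}$ are never touched by the inductive clauses on the $\R$ side (they are introduced only by the $\rightarrow$-clause and correspond bijectively to pairs of $\R$-formulas), so no clash arises between the evaluation of an $\R$-atom $p_i$ and the auxiliary atoms $d_{i,j}$; this is guaranteed by the signature convention $\tau^{+}$ given just before the theorem.
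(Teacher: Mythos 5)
Your proof is correct and is exactly the straightforward structural induction the paper has in mind (the paper itself omits the argument, noting only that the simplified form of $\T^{E}$ makes it immediate). The key step — splitting the $\rightarrow$ case into the material-implication part (handled by the inductive hypothesis) and the relatedness part (read off from $v^{*}(d_{\alpha,\beta})$ by the definition of $f^{E}$) — is precisely how the translation is designed to work.
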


\begin{corollary}
	$\CPL$ is at least as $expressive_g$ as $\R$.
\end{corollary}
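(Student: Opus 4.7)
The plan is simply to assemble the three clauses of the definition of $expressiveness_g$ for the pair $(\T^E, f^E)$ already constructed immediately above. Since the definition requires truth-preservation, finite generation, and being model-based, I would take each in turn, with most of the substantive work packaged upstream.

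Truth-preservation is exactly the content of the preceding theorem adapted from Epstein, so this clause is discharged by a single citation of that theorem. Finite generation reduces to direct inspection of the clauses defining $\T^E$: the literal clause $\T^E(x_1) = x_1$ with $x_1$ ranging over the propositional variables, the literal clauses for $\neg$ and $\land$ with $x_i$ ranging over $\F_1$, and the clause $\T^E(x_1 \rightarrow x_2) = (\T^E(x_1) \supset \T^E(x_2)) \land d_{x_1, x_2}$ with $(x_1, x_2) \in \F_1 \times \F_1$. This is a finite list of clauses in precisely the schematic form the definition allows.

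For the model-based condition, I would exhibit the required decomposition $f^E(\mathfrak{M}, \mathscr{R}, v) = (f^E_1(\mathfrak{M}), f^E_2(\mathfrak{M}, \mathscr{R}, v))$ by letting $f^E_1$ strip off the truth-table for $\rightarrow$ to produce $\mathfrak{M}^*$, and letting $f^E_2$ produce the $\CPL$-valuation $v^*$, which agrees with $v$ on the original $p_i$ and is dictated by $\mathscr{R}$ on the auxiliary variables $d_{\phi, \psi}$ according to the two clauses given in the construction. Because these propositional models carry no world coordinate, the definition (phrased for pointed models of the form $(\mathfrak{M}, w)$) specializes immediately to this setting.

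I do not expect any real obstacle: the substantive semantic content is already in the theorem, and the remaining two conditions are matters of inspecting the construction. The only slightly delicate point is the adaptation of the model-based clause from the pointed-Kripke-model setting in which it is phrased to the purely propositional setting used here, but this is routine and is precisely what the author tacitly invokes when remarking that $f^E$ is \emph{clearly} model-based. With the three bullets verified, $(\T^E, f^E)$ is a witness to $expressiveness_g$, and the corollary follows.
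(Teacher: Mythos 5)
Your proposal is correct and follows the same route as the paper: the corollary is meant as an immediate consequence of the preceding theorem (truth preservation) together with the in-text observations that $\T^E$ is finitely generated and $f^E$ is model-based, which you simply make explicit by exhibiting the schematic clauses and the decomposition of $f^E$. The only point worth noting is that your verification of the model-based and finitely-generated conditions is somewhat more detailed than the paper's, which asserts both by inspection.
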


The main question now is: does $(\T^E,f^E): \R \longrightarrow \CPL$ show that $\CPL$ is at least as expressive as $\R$? We do not think it is reasonable to say so, since the extra expressiveness brought about by the implication connective in $\R$ is only by a trick mimicked in $\CPL$. Independently of the model-translation $f^E$ to give the intended truth values for the ``relevance-mimicking" variables $d_{\phi,\psi}$, it is not possible to have a relevant conditional in $\CPL$, by say, adjoining to a conditional $\phi \supset \psi$ such variables $d_{\phi,\psi}$. To do so, would require too much for the intended meaning of such variables. Surely this would not augment the expressive power of the propositional logic, as it concerns only an interpretation of propositional variables, and intuitively, specific interpretations of propositional variables do not influence the expressiveness of a logic.

Anyway, the model-mappings are not essential for these translations using indexed variables,  they only facilitate their definition. An early example was given by Richard Statman in \cite{Statman-IPLPSC} where a translation of $\IPL$ into its implicational fragment $\IPL^{\restr \{\rightarrow\}}$ is presented. There, the conjunctions $p \land q$ are mapped to implications containing $x_{p\land q}$, among formulas of the sort $x_p \rightarrow (x_q \rightarrow x_{p \land q})$, $x_{p \land q} \rightarrow x_p$, etc. Here the situation is entirely different since the proof-theoretic behaviour of individual conjunctions are encoded in specific variables using implicational axioms.

Coming back to Kuijer's criterion, we argued above that it is not enough to give an intuitively adequate account of expressiveness. If the model mapping were not from the source logic to the target logic but vice-versa, then there would not be such truth preserving mappings from $\R$ to $\CPL$, as there would be no way to construct the relatedness predicate $\mathscr{R}$ out of a $\CPL$-model. Kuijer discarded such a definition of the model mappings $f$ since it  implies that any truth-preserving translation is also validity preserving,\footnote{Suppose that for logics $\L=(\F, \M, \vDash_\L)$ and $\L'=(\F',\M'\vDash_{\L'})$ that $(\T,f): \L \longrightarrow \L'$ is truth-preserving, with $\T:\F \longrightarrow \F'$ and $f: \M' \longrightarrow \M$. Suppose $\phi$ is $\L$-valid, then for any model $\U' \in \M'$, $f(\U') \vDash_\L \phi$, thus, by truth-preservation, $\U' \vDash_{\L'} \T(\phi)$, but $\U'$ is any $\L'$-model, thus, $\T(\phi)$ is $\L'$-valid.}  and some of his paradigmatic examples of multi-class expressiveness are not validity preserving.

Let us analyse a possible  strengthening on the formula translation. We will not give a detailed analysis of features of translations since it suffices to notice that Epstein's translation preserves completely the structure of the formulas, except for $\rightarrow$. For this case, additional propositional variables $d_{\phi,\psi}$ must be introduced to bear the intended meaning of $\mathscr{R}$ (variables whose interpretation in $\CPL$ is sustained by the model translation.) If one required that $\T$ be \emph{compositional}, that is,  every $n$-ary connective $C(\phi_1, ..., \phi_n)$ of the source logic is translated by a schema $C^\T(\T(\phi_1)/\xi_1,...,\T(\phi_n)/\xi_n)$ of the target logic, then the above translation would not pass the test. This is because $p_1 \rightarrow p_2$ is translated through the schema $¬(\xi_1 \land ¬\xi_2) \land d_{p_1,p_2}$, and $p_3 \rightarrow p_4$ by the schema $¬(\xi_1 \land ¬\xi_2) \land d_{p_3,p_4}$. If the translation were compositional, dealing with the same connective, the same translation schema would be used.

The problem of adopting this criterion is that it implies that the connectives be translated one at a time, and again some of the paradigmatic translations selected by Kuijer takes into consideration sequences of connectives, so they would not satisfy it.

Therefore, to prevent translations such as those above from passing the test for multi-class, one would have to use a criterion for $\T$ that is stronger than being finitely generated, but weaker than being compositional. Nevertheless, the enterprise of placing restrictions on the formula translations $\T$ alone seems not to be promising, as the model-translations play a major role in the counter-examples presented above. On the other hand, placing also restrictions on model-translations and making them fit with the restrictions on formula-translations is a very complex enterprise, and there may be better alternatives.

Given this situation, we would like to suggest a change of perspective as regards relative expressiveness between logics. Below, some comments will be made regarding the nature of the notion of expressiveness and its relation with the concept of logical system it applies to.

\subsection{Single-class expressiveness vs multi-class expressiveness vs translational expressiveness}

Now we would like to make some remarks on the study of the relation of expressiveness between logics. As we commented before, in the single-class framework it is very simple to define relative expressiveness, since there is a common ground, the structures, where one can compare whether the sentences have the same meaning. Now consider the multi-class framework, if $\L=(\F,\M,\vDash)$ and $\L'=(\F',\M',\vDash')$ are defined on different classes of structures, how would we know whether an $\L$-sentence $\phi$ and an $\L'$-sentence $\psi$ have the same meaning? After all, in this case it trivially holds that $Mod_\L{(\phi)} \not = Mod_{\L'}(\psi)$. 

As we saw, for this task new tools are needed: a model-mapping $f: \mathcal{M} \longrightarrow \mathcal{M'}$ or $f': \mathcal{M'} \longrightarrow \mathcal{M}$;\footnote{The translation $f$ presupposes a mapping $\sigma$ of signatures: for each $\L[\tau]$-structure, there would correspond a $\L'[\sigma(\tau)]$-structure, respectively for $f'$.} and a formula-mapping $\T: \F \longrightarrow \F'$ or $\T': \F' \longrightarrow \F$. Now, for an $\L$-formula $\phi$ and $\L'$-formula $\psi$, we would have some possibilities for guessing when $\phi$ and $\psi$ have the same meaning:

\begin{multicols}{2}
\begin{itemize}
	\item $Mod_\L(\phi) = Mod_\L(\T'(\psi))$,
	\item $Mod_{\L'}(\psi)= Mod_{\L'}(\T(\phi))$, 
	\item $f[Mod_\L(\phi)]= Mod_{\L'}(\psi)$,\footnote{Let $f[Mod_\L(\phi)]= \{f(\U) \,\,|\,\, \U \in Mod_\L(\phi)\}$.}
	\item $f'[Mod_{\L'}(\psi)] = Mod_\L(\phi)$.
\end{itemize}
\end{multicols}

Thus, now the weight goes on the notion of translation $(\T,f)$. As we saw in the examples presented above, for ($\U,\phi$) in $\L$, and ($\U',\psi$) in $\L'$, the task of establishing the congruence between the pairs ($\U$, $\phi$) and ($\U'$, $\psi$)  by means of translations is very difficult. Basing it on satisfaction is far away from being sufficient, since we can easily devise translation functions such that $\U$ satisfies $\phi$ iff $\U'$ satisfies $\psi$.

On the other hand, imposing conditions on $(\T,f)$ is a complex enterprise, because either it under-generates or, by a little breach, it over-generates. Moreover, the need to have model-mappings besides formula\--map\-pings may open up a back door to undesirable translations, to see it, consider again the examples offered against García-Matos \& Väänänen's and Kuijer's approaches. All of them use some ``trick'' in the formula-translation function and sustain it through the model-translation. Then it is of little help to place structural restrictions on formula-translations, as did Kuijer. He also tried placing restrictions on model-translations, but it did not help either.

	Therefore, it might be more promising to move to a wider framework of relative expressiveness, dispensing with the semantic notions altogether.  In this framework, to be called ``translational expressiveness'', we would then concentrate the investigations on the conditions on formula translations. The aim is to find the set of conditions that better preserve/respect the theoremhood/consequence relation and the structure of formulas of each logic. This way a reasonable formal criterion of expressiveness for Tarskian and proof-theoretic logics (TPL, for short) would be obtained, and a bigger range of logics would be comparable. Finally, these advantages would arguably come at no cost, since this wider enterprise would not be more difficult than multi-class expressiveness.

	The big difference between the approaches of expressiveness is not in the division between expressiveness for model-theoretic logics and for TPL, but in the division, \emph{in} model-theoretic logics, of expressiveness within the same and within different classes of structures. Naturally the most direct concepts of expressiveness are linked with the capacity of characterizing structures, but this only applies when comparing the \emph{same} class of structures.
	
	If one allows translations between structures, such capacity is no longer at issue.  Once we depart from the safe harbour of a single class of structures for comparing logics, then all bets are off. Multi-class expressiveness does not guarantee a firmer grasp of the intuitive concept of expressiveness anymore than translational expressiveness. Since the move to a wider framework might not only free us from problems inherent to multi-class expressiveness, but also allow a bigger range of comparison of logics, then the prospects for the enterprise are better.

	As we said in the introduction, people have been using informally some concepts of translational expressiveness between logics. However, as opposed to what happens with model-theoretic logics, to the best of our knowledge, in the literature there is only one explicit and formal criterion in this framework, that of \cite{Mossakowski-WILT}. In the next section, their proposal will be analysed and we will show that it is not adequate. We shall then propose some adequacy criteria for expressiveness and a formal criterion in the framework of translational expressiveness will be given. We then argue that the criterion satisfies the adequacy criteria.

\section{Translational expressiveness: obtaining a still wider notion of expressiveness}

	In this section we will deal with logics in the Tarskian and proof-theoretic sense. We also mention logics taken as a closed set of theorems/validities, to be called simply ``formula logics". Let $\L_1$ and $\L_2$ be  logics, $\Gamma \cup \{ \phi\}$ be a set of $\L_1$-formulas and $\T$ a translation mapping $\L_1$-formulas into $\L_2$-formulas in such a way that for each $\L_1$-formula $\phi$:

	\begin{center}
	$\vdash_{\L_1} \phi$ if and only if $\vdash_{\L_2} \T(\phi)$.
	\end{center}
	In this case $\L_1$ is translatable into $\L_2$ with respect to theoremhood.

	If it is the case that
	\begin{center}
	$\Gamma \vdash_{\L_1} \phi$ if and only if $\T(\Gamma) \vdash_{\L_2} \T(\phi)$
	\end{center}
	then  $\L_1$ is translatable into $\L_2$ with respect to derivability  \cite[p. 216]{Prawitz-Malmnas}. The later translations are known as \emph{conservative} translations \cite{Feitosa-Dottaviano-CT}.

\begin{definition}[Conservative translation]
	A conservative translation is a translation with respect to derivability.
\end{definition}

Whenever we want to refer indistinctly to translations with respect to theoremhood or conservative translations, the term \emph{back-and-forth} will be employed.

\begin{definition}[Back-and-forth translation]
A translation is back-and-forth if it is either a theoremhood preserving or a conservative translation.
\end{definition}

\subsection{Mossakowski et. al.'s approach}

As far as we know, Mossakowski et al. \cite{Mossakowski-WILT} proposed the first explicit criterion for the concept of sub-logic and expressiveness in the framework of translational expressiveness:

\begin{definition}[Sub-logic]
$\L_1$ is a sub-logic of $\L_2$ if and only if there is an  injective conservative translation from $\L_1$ to $\L_2$;
\end{definition}

\begin{definition}[Expressiveness]
$\L_1$ is at most as expressive as $\L_2$ iff there is a conservative translation $\alpha: \L_1 \longrightarrow \L_2$.
\end{definition}

The authors do not explain why sub-logic requires injective conservative mappings while expressiveness does not. Anyway, we will see that these criteria for sub-logic and expressiveness via conservative mappings do not work. 

The conception that conservative translations could give rise to a notion of expressiveness and also a notion of logic inclusion has been supported more than once. For example, in  \cite[p. 233]{Coniglio-TSNTBL} it is said that the existence of a conservative translation (maybe injective or bijective) would give rise to some kind of logic inclusion between Tarskian logics.\footnote{The author says (\ibid):
\begin{quotation}
If we assume (...) a Tarskian perspective, then a logic system is nothing more than a set of formulas together with a [consequence] relation (...) Thus, the preservation of that relation by a conservative translation [from $\L_1$ to $\L_2$] would reveal that, as structures,  $\L_2$ ``contains'' $\L_1$ (Probably we should add the requirement that $f$ is an injective or even a bijective mapping.)
\end{quotation}
} Also for Kuijer, conservative translations give an adequate concept of expressiveness for Tarskian logics \cite[p. 86]{Kuijer-PHD}.\footnote{The author says (\ibid):
\begin{quotation}
There is a conservative translation from $\L_1$ to $\L_2$ if and only if everything that can be said in $\L_1$ can also be said in $\L_2$. 
\end{quotation}
}

Unfortunately, conservative translations will not make a reasonable concept neither of sub-logic nor of expressiveness. Due to a result of Je\v{r}ábek \cite{Jerabek-UCT}, explaining expressiveness and sub-logic through conservative translations would make $\CPL$ include and be at least as expressive as many familiar logical systems, e.g. first-order logic. He proved the following result (\ibid, p. 668), where for a logic $\L$, a translation is most general whenever it is equivalent to a substitution instance of every other translation of $\L$ to $\CPL$.

\begin{theorem}[Je{\v r}ábek]
For every finitary deductive system $\L= (\F, \vdash)$ over a countable set of formulas $\F$, there exists a conservative most general translation $\T: \L \rightarrow \CPL$. If $\vdash$ is decidable, then $f$ is computable. 
\end{theorem}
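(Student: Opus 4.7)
The plan is to exploit the Stone-like structure of the collection of $\vdash_\L$-closed theories (subsets $T\subseteq \F$ closed under $\vdash_\L$). Enumerate $\F=\{\phi_0,\phi_1,\ldots\}$ so that subsets of $\F$ correspond bijectively to points of the Cantor space $2^\omega$, and let $\K\subseteq 2^\omega$ be the subset consisting of those points whose corresponding subset of $\F$ is a $\vdash_\L$-closed theory. Finitarity of $\vdash_\L$ makes $\K$ a closed subset: being $\vdash_\L$-closed amounts, for each finite inference $\{\phi_{i_1},\ldots,\phi_{i_k}\}\vdash_\L \phi_j$, to the clopen constraint that coordinates $i_1,\ldots,i_k$ simultaneously being $1$ forces coordinate $j$ to be $1$, and $\K$ is the intersection of these clopen sets. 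Moreover $\K$ is nonempty, since $\F$ itself is always closed.

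The translation then arises from the classical fact that every nonempty closed subset of Cantor space is a continuous image of $2^\omega$. Concretely, view $\K$ as the set of infinite branches of a pruned binary tree $\mathcal{S}\subseteq 2^{<\omega}$, and define a continuous surjection $\sigma\colon 2^\omega\to \K$ by reading the input as a sequence of left/right choices through $\mathcal{S}$, falling back to the leftmost available branch whenever the requested direction has been pruned away. Writing $\sigma=(\sigma_n)_{n\in\omega}$, each coordinate $\sigma_n\colon 2^\omega\to\{0,1\}$ is continuous into a discrete space and therefore depends on only finitely many coordinates of its input, so it is the indicator function of a clopen set and hence representable as a $\CPL$-formula $\T(\phi_n)$ on fresh propositional variables $q_0,q_1,\ldots$. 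Conservativity is then a chain of equivalences: $\Gamma\vdash_\L\phi$ iff every $\vdash_\L$-closed $T$ with $\Gamma\subseteq T$ contains $\phi$, iff (by surjectivity of $\sigma$) every $\CPL$-valuation $v$ satisfying each $\T(\gamma)$ for $\gamma\in\Gamma$ also satisfies $\T(\phi)$, iff $\T(\Gamma)\vdash_\CPL\T(\phi)$.

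For the ``most general'' clause, any other conservative translation $\T'\colon \L\to \CPL$ yields, via $v\mapsto\{\phi_n : v\vDash\T'(\phi_n)\}$, a continuous map from $\CPL$-valuations into $\K$; the universal property of the tree-based surjection $\sigma$ should furnish a continuous map $\rho\colon 2^\omega\to 2^\omega$ through which $\T'$ factors, and such a $\rho$ is exactly a substitution of $\CPL$-formulas for the variables $q_i$, which is what the ``substitution instance'' clause demands. The computability clause falls out of the same picture: if $\vdash_\L$ is decidable, then the tree $\mathcal{S}$ is decidable and the leftmost-fallback construction of $\sigma$ becomes effective, so each $\T(\phi_n)$ can be computed. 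The main obstacle I anticipate is turning the tree-based surjection into something combinatorially explicit enough that (i) one can read off an honest $\CPL$-formula for each $\T(\phi_n)$ from finitely many tree levels, and (ii) the universal factoring underlying ``most general'' can be realised as a concrete propositional substitution rather than merely a continuous map. The bookkeeping that ties the tree levels to the finite set of variables appearing in each $\T(\phi_n)$ is where the careful part of the construction lies.
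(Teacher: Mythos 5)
The paper itself gives no proof of this theorem: it is quoted from Je\v{r}\'abek's article \cite{Jerabek-UCT}, and the author explicitly omits the construction of the translation. Measured against the source, your sketch is essentially Je\v{r}\'abek's actual argument: the $\vdash_\L$-closed theories form a nonempty closed subset $\K$ of Cantor space (nonempty because $\F$ is a theory, closed by finitarity), a pruned-tree/leftmost-fallback map $\sigma\colon 2^\omega\to\K$ is continuous and surjective, its coordinates are clopen and hence $\CPL$-formulas, and conservativity is exactly the chain of equivalences you give (using that $\Gamma\vdash_\L\phi$ iff $\phi$ lies in every theory containing $\Gamma$, plus strong completeness of $\CPL$). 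That part is correct as it stands.

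Two points need to be pinned down. First, for the ``most general'' clause you appeal to ``the universal property of the tree-based surjection,'' but an arbitrary continuous surjection onto $\K$ has no such property; what does the work is that the leftmost-fallback map is a \emph{retraction}, i.e.\ the identity on $\K$. You must also verify that the map $G(v)=\{\phi_n : v\vDash\T'(\phi_n)\}$ actually lands in $\K$: this follows from the left-to-right direction of conservativity of $\T'$ together with soundness of $\vdash_{\CPL}$, and is not automatic. Once both facts are in place, the factoring map $\rho$ is simply $G$ itself, realized syntactically by the substitution $q_i\mapsto\T'(\phi_i)$; since each $\T'(\phi_i)$ is a formula, $\rho$ is an honest propositional substitution, and $\sigma\circ G=G$ yields $\T'(\phi_n)\equiv\rho(\T(\phi_n))$. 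Second, for computability you should make explicit why the tree is decidable: a node $s\in 2^n$ extends to a branch of $\K$ iff $\{\phi_i : i<n,\ s_i=1\}\not\vdash_\L\phi_j$ for every $j<n$ with $s_j=0$ (because the deductive closure of the positive part is itself a theory witnessing extendibility), which is decidable exactly when $\vdash_\L$ is; the $n$-th output bit of the fallback walk then depends only on the first $n+1$ input bits and on finitely many levels of the tree, so each $\T(\phi_n)$ is effectively a formula in $q_0,\dots,q_n$. With these repairs the proposal matches the cited proof.
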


The defined mapping is injective.\footnote{For the sake of brevity, we omit the definition of the translation and simply point out that it is a non-general-recursive translation (to be defined below).} Let a logic be called ``reasonable'' if it is a countable finitary Tarskian logic. Je\v{r}ábek managed to generalize even more his results so that almost any reasonable logic can be conservatively translated into the usual logics dealt with in the literature.\footnote{Among others, classical, intuitionistic, minimal and intermediate logics, modal logics (classical or intuitionistic), substructural logics, first-order (or higher-order) extensions of the former logics.}  Now one would hardly accept that every  countable finitary logic has the same expressiveness or is one sub-logic of the other.

The author criticizes the notion of conservative translation for not requiring the preservation of neither the structure of the formulas nor the properties of the source logic \cite[p. 666]{Jerabek-UCT}. Thus, it must be strengthened in order to serve for an expressiveness measure.  This could be done in a simpler way by requiring injective, surjective or bijective mappings. As Je{\v r}ábek's mapping is injective, only requiring injectiveness will not do. As a matter of fact, it seems that already requiring injectiveness one is overshooting the mark. Since in this way $\CPL^{\restr \{\land, ¬\}}$ would not be as expressive as $\CPL^{\restr\{\land, ¬, \lor\}}$. Any mapping $g:\CPL^{\restr\{\land, ¬, \lor\}} \longrightarrow \CPL^{\restr\{\land, ¬\}}$ would have to map both $\CPL^{\restr\{\land, ¬, \lor\}}$-sentences $\phi \lor \psi$ and $¬(¬\phi \land ¬\psi)$ to the same $\CPL^{\restr\{\land, ¬\}}$-sentence $¬(¬g(\phi) \land ¬g(\psi))$, so it would not be injective.


Other kinds of strengthening hinted by  Je{\v r}ábek's (\ibid) are: 
\begin{enumerate}
\item force the mappings to preserve more structure of the source logic sentences in the target logic;
\item force the mappings to preserve more properties of the source logic.
\end{enumerate}

The adequacy criteria for expressiveness to be given below will require to some extent (1) and (2).

\subsection{Adequacy criteria for expressiveness}

As we saw above, Mossakowski et al. \cite{Mossakowski-WILT} gave a proposal for a wide notion of expressiveness: by means of the existence of conservative translations. Due to Je{\v r}ábek's results on the ubiquity on this kind of translation, their definition is not adequate. Maybe we should step back and think about some adequacy criteria every approach to expressiveness ought to accomplish.

	The intuitive explanation for expressiveness $(E)$ given in the beginning elucidates relative expressiveness in terms of a certain congruence of meanings. It appears already in a more direct form in Wójcicki's Theory of Logical Calculi \cite[p. 67]{Wojcicki-TLC}, and we place it as the first adequacy criterion

	\begin{quote}[\textbf{Adequacy Criterion 1}] $\L_2$ is at least as expressive as $\L_1$ only if everything that can be said in terms of the connectives of $\L_1$ can also be said in terms of the connectives of $\L_2$.\end{quote}

Here, for ``being said in terms of the connectives'' there can be stricter interpretations (as proposed by Wójcicki, Humberstone, Epstein) and wider interpretations (as proposed by Mossa\-kow\-ski et al. and us), to be developed below.

There are some meta-properties of logics that are intuitively known to limit or increase expressiveness. Thus, the presence/absence of such properties can be used to test whether there can be or not an expressiveness relation between the given logics. A first one coming to mind is that nothing can be expressed in a trivial logic, so it cannot be more expressive than any logic. Another one has to do with the relation between expressiveness and computational complexity.  This relation has even been stated as the ``Golden Rule of Logic'' by van Benthem in \cite[p. 119]{VanBenthem-WILGSI}, where he says ``gains in  expressive power are  lost  in  higher complexity''. Nevertheless, the ``Golden Rule'' is not quite useful here, since we know that in general neither a low expressiveness means low complexity,\footnote{For example, there are propositional logics whose complexity  is in  each arbitrary degree of unsolvability (e.g. see \cite{Gladstone-SWCPC}).} nor a high complexity means high expressiveness.\footnote{There can be equally expressive logics that, though both decidable, have very different computational complexities (e.g. see \cite{Levesque-Brachman-ETKRR}).}

Nevertheless the complexity levels of decidability/undecidability can be useful for expressiveness comparisons: if a logic is decidable, then it cannot describe Turing machines, Post's normal systems, or semi-Thue systems. Therefore, a decidable logic $\L$ cannot be \emph{more} expressive than an undecidable logic $\L'$, otherwise, $\L$ would not be decidable!

The third meta-property that could be useful when evaluating expressiveness relations (except, naturally, when dealing with formula-logics) is the deduction theorem. Though involved in many formulation issues, as we shall see, a logic has a deduction theorem when it has the capacity to express in the object language its deductibility relation. Thus, other things being equal, a logic having this capability is intuitively more expressive than another one lacking it. Therefore, it is desirable that an expressiveness relation carries with it the deduction theorem, so that (a) below apparently should hold 

\vspace{1mm}
\begin{minipage}{0.1\textwidth}
(a)
\end{minipage}
\begin{minipage}{0.9\textwidth}
if $\L_2$ is more expressive than $\L_1$, and $\L_1$ has a deduction\\ theorem, then so does $\L_2$. 
\end{minipage}\vspace{2mm}

We have some issues here. Being formulation sensitive, it is complicated to define in which circumstances the existence of a deduction theorem for a logic implies its existence in another logic, whenever there is an expressiveness relation between them.  For example, a less expressive logic might have the standard deduction theorem,\footnote{To be defined below.} while the more expressive logic has only a general version of it, or perhaps lacks it completely.  This happens with Mendelson's $\FOL$,\footnote{A Hilbert-style first-order calculus with the generalization rule ``from $\phi$ infer $\forall x \phi$''. For more, see \cite[p. 76]{Mendelson-IML}.} the propositional fragment of it still satisfies the standard deduction theorem, though it fails for quantified formulas. So, it does not seem reasonable to say that this formulation of $\FOL$ is not more expressive than $\CPL$, because it does not satisfy the standard deduction theorem, since the fragment of $\FOL$ as expressive as $\CPL$ satisfies it.\footnote{The same considerations apply to $\L_{TK}$ described in \cite{Feitosa-Nascimento-Gracio-LTK} and \cite{Moreira-PHD}.} 

Cases like these constrain us to limit the role of the deduction theorem in expressiveness relations, admitting wider formulations of it.  Thus we are forced to adapt (a) accordingly so as to be able to take into account such phenomena. Finally, we have the meta-property related adequacy criterion.

\begin{quote}[\textbf{Adequacy Criterion 2}] It cannot hold that $\L_2$ be more expressive than $\L_1$ when 

\begin{itemize} 
	\item $\L_1$ is non trivial and $\L_2$ is trivial;
	\item $\L_1$ is undecidable and $\L_2$ is decidable;
	\item $\L_1$ satisfies the standard deduction theorem and the language fragment of $\L_2$ purportedly as expressive as $\L_1$ does not satisfy (not even) the general  deduction theorem;	
\end{itemize}
\end{quote}

The last criterion reflects the intuition that expressiveness is a transitive relation and there are logics that are more expressive than others.

\begin{quote}[\textbf{Adequacy Criterion 3}] (Taken from \cite{Kuijer-PHD}) The expressiveness relation should be a non-trivial pre-order, that is, it should be a transitive and reflexive relation, and there must be some pair of logics $\L_1$ and $\L_2$ such that $\L_2$ is not at least as expressive as $\L_1$.
\end{quote}

We now analyse with greater detail the criteria 1 and 2.

\subsubsection{Criterion 1- on ``whatever can be said in terms of the connectives''}\label{definability-of-connectives}

We can understand this criterion as saying ``every connective of $\L_1$ is definable in $\L_2$''. But the usual notion of definability is either treated within the same logic, or between different logics within the same class of structures.  As we intend to deal with translations between logics, the usual notion of definability is too rigid. We must give a broader reading of the criterion 1 in order to understand it as imposing an intuitive restriction on translations between logics. Thus the idea is to impose restrictions $P_1,P_2,...$ on translations so that 

\begin{quote}
$\T: \L_1 \longrightarrow \L_2$  satisfies $P_1,P_2,...$  only if, intuitively, everything that can be said in terms of the connectives of $\L_1$ can also be said in terms of the connectives of $\L_2$; let us say in shorter terms that this happens only if the connectives of $\L_1$ are \emph{generally preserved} in $\L_2$.  
\end{quote} 
 
 In the sequence some candidates for such $P_1,P_2,...$ are listed, the back-and-forth condition was given before.

\begin{definition}[Compositional]
A translation $\T: \L_1 \longrightarrow \L_2$ is compositional whenever for every $n$-ary connective $\#$ of $\L_1$ there is an $\L_2$-formula $\psi^{\#}$ such that $\T(\#(\phi_1,...,\phi_n))= \psi^{\#}(\T(\phi_1),...,\T(\phi_n))$.
\end{definition}

\begin{definition}[Grammatical]
A grammatical translation $\T$ is a back-and-forth compositional translation  such that, for a sentence $\phi$, $\T(\phi)$ may contain no other formulas other than the ones appearing in $\T(p)$, where $p$ appears in $\phi$ (thus, no parameters are allowed). 
\end{definition}

\begin{definition}[Definitional]
A definitional translation $\T$ is a grammatical translation for which $\T(p)=p$ for every atomic $p$.
\end{definition}

We have four proposals for filling the above list of restrictions. All of them require basically two conditions, taking as $P_1$ the back-and-forth condition.  In decreasing order of strictness, there is divergence in taking $P_2$ as a 

\begin{enumerate}[noitemsep]
	\item definitional translation (Wójcicki and Humberstone),
	\item grammatical translation (Epstein and apparently Koslow),
	\item general-recursive translation (to be defined below),
	\item surjective conservative translation (Mossakowski et al.).
\end{enumerate}

	Humberstone \cite{Humberstone-BTP}, recalling Wójciki's definitional translations and intuitions about expressiveness, guessed that if there is a definitional translation between $\L_1$ and $\L_2$, then all connectives in $\L_1$ are preserved in $\L_2$.\footnote{However, it seems that in \cite[p. 147]{Humberstone-BTP} he allows that connectives are preserved in a weaker way, through compositional translations.} For us, the existence of a definitional translation from $\L_1$ to $\L_2$ is the strongest guarantee that the connectives of $\L_1$ are generally preserved in $\L_2$. Nevertheless, it is too strict a requirement, and there are weaker forms of translations that can also do the job.

	For Epstein \cite[p. 302]{Epstein-SFL}, a grammatical translation is a homomorphism between languages and thus it yields a translation of the connectives. The justification is that such translations  are only possible when for each connective in the source logic, there corresponds a specific structure in the target logic that behaves similarly. Thus, through a grammatical translation, the connectives of the source logic are generally preserved in the target logic. Koslow \cite[p. 48]{Koslow-ID} also allows that a connective from one logic $\L_1$ ``persists'' in $\L_2$ if there is a homomorphism from $\L_1$ to $\L_2$.


According to Mossakowski et al. \cite{Mossakowski-WILT}, grammatical translations are too demanding for the task, as many useful and important translations are non\--gramma\-ti\-cal (e.g. the standard modal translation). For them, instead of seeking to preserve the structure of the formulas, it would be better to preserve the proof-theoretic behaviour of the connectives and to treat the connectives only as regards this behaviour (\ibid, p. 100). In this paper, some proof-theoretic conditions on the connectives are listed, e.g. for conjuntction the condition is $\Gamma \vdash \phi \land \psi$ iff $\Gamma \vdash \phi$ and $\Gamma \vdash \psi$. This formulation may lead one to think that $\land$ here shall be a logical constant, and not possibly a formula $\gamma(\phi,\psi)$ (think of $\CPL^{\restr\{\lnot,\lor\}}$, where $\gamma(\phi,\psi)= ¬(¬\phi \lor ¬\psi)$); naturally in the first case, the whole proposal would make no sense. In table \ref{Table-ptc-n}  we reformulate the conditions to reflect their proposal more clearly, where $\delta^\#$ is an arbitrary formula that stands for the connective $\#$.

\begin{table}[h]
\begin{tabular}{l l}
falsum		& \s	$\delta^{\bot}(\xi) \vdash \phi$, for every $\phi$ \\

conjunction     & \s     $\Gamma \vdash \delta^{\land}(\phi, \psi)$ \hspace{20mm} iff \s $\Gamma \vdash \phi$ and $\Gamma \vdash \psi$\\
disjunction     & \s     $\delta^{\lor}(\phi,\psi), \Gamma \vdash \chi$ \hspace{16mm} iff \s $\phi,\Gamma \vdash \chi$ and $\psi,\Gamma \vdash \chi$\\
implication     & \s     $\Gamma \vdash \delta^{\rightarrow}(\phi, \psi)$  \hspace{18.5mm} iff \s  $\Gamma,\phi \vdash \psi$ \\
negation        & \s	$\Gamma, \phi \vdash \delta^{\bot}(\xi)$ \hspace{20mm} iff \s $\Gamma \vdash \delta^\lnot(\phi)$.
\end{tabular}

\caption{Reformulation of proof theoretic connectives as given by Mossakowski et al.}
\label{Table-ptc-n}

\end{table}

\begin{definition}[presence of a proof-theoretic connective]
	A proof-theoretic connective is \emph{present} in a logic if it is possible to define the corresponding operations on sentences satisfying the conditions given in table \ref{Table-ptc-n}. 
\end{definition}

We shall now investigate this idea in detail and argue that, as it is, the preservation of connectives would require mappings stricter than conservative translations otherwise the notion of the ``presence'' of a connective must be relaxed.

\paragraph{Drawbacks on the preservation of proof-theoretic connectives}

A translation $\T:\L_1 \longrightarrow \L_2$ \emph{transports} a given $\L_1$-connective $\#$ if its presence in $\L_1$ implies its presence in $\L_2$, the converse implication is called \emph{reflection} \cite[p. 100]{Mossakowski-WILT}.  It is claimed (\ibid) that if a mapping $\T: \L_1 \longrightarrow \L_2$ is conservative and surjective, then all proof theoretic connectives of $\L_1$ are transported to $\L_2$ and all proof-theoretic connectives present in $\L_2$ are reflected in $\L_1$. However, this claim must be taken with a grain of salt, let us see why.

Let $\L_1$ be a logic having a proof-theoretic conjunction according with the table \ref{Table-ptc-n} above and suppose there is a surjective conservative mapping $\T: \L_1 \longrightarrow \L_2$. For $\L_2$-formulas $\delta_1,\delta_2$, let $\Gamma \cup \{\phi,\psi\}$ be a set of $\L_1$-formulas with $\T(\phi)=\delta_1$ and $\T(\psi)=\delta_2$. Then it holds that

\begin{quote}

	($\T(\Gamma) \vdash_{\L_2} \T(\phi)$ and $\T(\Gamma) \vdash_{\L_2} \T(\psi)$) $\hspace{1mm}$ iff $\hspace{1mm}$ $\Gamma \vdash_{\L_1} \delta^\land(\phi,\psi)$ $\hspace{1mm}$ iff $\hspace{1mm}$ $\T(\Gamma) \vdash_{\L_2} \T(\delta^\land(\phi,\psi))$. 

\end{quote}

Thus, $\L_2$ would have proof-theoretic conjunction. The grain of salt is that, once no structural restriction is imposed upon $\T$, it is not necessary that  $\T(\delta^\land(\phi,\psi))$ be constructed out of $\T(\phi)$ and $\T(\psi)$. In this case, it seems at least unnatural to say that  $\T(\delta^\land(\phi,\psi))$  is an \emph{operation} on the sentences $\T(\phi)$ and $\T(\psi)$. 

Therefore,  we must relax what it means for a connective to be \emph{present} in a logic. One has to say that e.g. the proof-theoretic conjunction is present in a logic $\L_2$ if, for all formulas $\delta_1,\delta_2$ and set of formulas $\Delta$, there is a formula $\gamma$ such that ($\Delta \vdash_{\L_2} \delta_1$ and $\Delta \vdash_{\L_2} \delta_2$) iff $\Delta \vdash_{\L_2} \gamma$.  A similar reformulation should be given for the other connectives. In this case, though, whenever it holds that $\Delta \vdash_{\L_2} \delta_1$ and $\Delta \vdash_{\L_2} \delta_2$, then any $\L_2$-theorem in the place of $\gamma$ serves to satisfy this condition for conjunction. 

For example, take a Tarskian logic $\L$ defined on the signature $\{p,q,r, \top\}$, where $p,q,r$ are propositional variables and $\top$ the constant for logical truth. Then $\L$ has proof-theoretic conjunction since $p,q \vdash p$  and $p,q \vdash q$ holds iff $p,q \vdash \top$. This is probably unproblematic and a consequence of the meaning of $\top$.  Nevertheless, for some cases  this approach to the presence of connectives has some downsides. For example, restrict $\L$ to the signature $\{p,\top\}$. Then $\L$ has the proof-theoretic conditional, since it holds that
\begin{itemize}
\item[] $p \vdash p$ iff $\vdash \top$, \s $\top \vdash p$ iff $\vdash p$, \s $p \vdash \top$ iff $\vdash \top$ \hspace{2mm} and \hspace{2mm}  $\top \vdash \top$ iff $\vdash \top$. 
\end{itemize}
But if the signature were incremented by another variable $q$, then the resulting system would no longer have a proof-theoretic conditional, since for no $\delta$ it would hold that $p \vdash q$ iff $\vdash \delta$. This volatility of the presence of proof-theoretic connectives is unreasonable.

Recapitulating, the idea of this approach is that one shall define the mappings so as to preserve the proof-theoretic connectives, instead of requiring the mappings themselves to preserve the structure of the formulas.  But if the mappings do not respect the structure of the formulas, what shall be called the presence of a connective, must also be relaxed. 

 Besides the inconvenients mentioned above, this proposal would be too restrictive in some cases. For example, Statman's translation \cite{Statman-IPLPSC} of  $\IPL$ into its implicational fragment shows  how can one ``express'' (in some sense of the term) conjunctions using only implicational formulas; recent works have generalized this result so that any logic having a certain natural deduction formulation and having the sub-formula principle is translatable into the implicational fragment of minimal logic \cite{Hermann-PLCSP}.\footnote{The idea of these translations is the following: for a given $\IPL$-formula $\phi$, take all sub-formulas $\delta_1,\delta_2$ and associate to it implicational axioms of the sort $x_{\delta_1 \land \delta_2} \rightarrow x_{\delta_1}$,  $x_{\delta_1 \land \delta_2} \rightarrow x_{\delta_2}$ and $x_{\delta_1} \rightarrow (x_{\delta_2} \rightarrow x_{\delta_1 \land \delta_2})$, where $x_{\delta_1}$, $x_{\delta_2}$ and $x_{\delta_1 \land \delta_2}$ are fresh variables.}  Nevertheless, not even in the weaker sense given above the conjunctions are ``present'' in $\IPL^{\restr \{\rightarrow\}}$.

Anyway, it must be borne in mind that to give a good and general definition of when a connective or operator is generally preserved is a difficult and spinous topic. Below we give another proposal, which is at the same time weaker (the translation mentioned above would enter) and stronger (requires structure-attentive mappings).

	Let us now consider the structure-attentive translations and think on the minimum conditions on the preservation of the structure of formulas that would allow for a reasonable and general notion of preservation of connectives.

\paragraph{General-recursive translations: allowing context-sensitivity in a general preservation of connectives}
\label{general-recursive}

	The criterion of compositionality given above a priori seems a reasonable condition for the preservation of connectives through translations. Notice that in the criterion the function $\T$ that translates $\#(\phi_1,...,\phi_n)$ is the same that translates the sub-formulas $\phi_i$. From this comes the compositionality: a translation $\T$ of a formula is obtained through the same translation $\T$ of its sub-formulas.

	Thinking about the issue of translating a connective, it is also reasonable that the translation be sensitive to the context where the connective is inserted.  This is the case in the translation $(\T_+): Grz \longrightarrow S4$ in \cite{Demri-Gore-2000}, where  $\T_+(¬\square p)= ¬\square p$, but $\T_+(\square p) = \square (\square(p \rightarrow \square p) \rightarrow p)$. Therefore, $\T_+$ distinguishes between translating $\square$-formula and $¬\square$-formula, and this is done through the help of an auxiliary translation (see complete definition in section \ref{examples}).  Thus, there are translations between some logics where the mappings must be context-sensitive, so as to convey the proper meaning of some source connectives in the target logic. 

	There are also those cases where the connectives can be dealt context\--in\-de\-pen\-dently but auxiliary translations are needed anyway. The standard translation of modal logic to $\FOL$, besides some parameters, needs $n$ auxiliary translations for each formula of modal degree $n$ e.g.  as $\T^{x}(p)=Px$ but $\T^{x}(\square \phi)= \forall y (Rxy \rightarrow \T^{y}(\phi))$.

For the sake of simplicity, we will restrict our notion of context-sensitivity to whether or not the connective to be translated is in the scope of an unary operator. When the translation of a $n$-ary connective $\#$ is sensitive as to whether it is on the scope of an unary $\circ$, a simple solution is to treat $\circ\#$ as a composite $n$-ary connective to be translated. With the aim of capturing these cases, let us consider  a sufficiently general kind of translation.

	French in \cite{French-PHD} presents a concept of recursively interdependent translation that includes non-compositional translations that are still defined recursively through the formation of formulas. A generalization of his concept will be employed here, since the original has an unmotivated restriction allowing only unary auxiliary mappings. The generalization allows auxiliary mappings of any arity and also has a simpler notation. Let $\L_1=(\F_1, \vdash_{\L_1})$ and $\L_2=(\F_2,\vdash_{\L_2})$ be logics,

\begin{definition}[General-Recursive] 
 Let $\T'_{1},...,\T'_{w}$ be auxiliary mappings of any arity defined inductively on $\F_1$-formulas.  A translation $\T: \F_1 \longrightarrow \F_2$ from $\L_1$ to $\L_2$ is \emph{general-recursive} if, for every $n$-ary connective $\#$ and formulas $\phi_1,...,\phi_n \in \F_1$, there is an $\L_2$-formula $\#^\T(p_1,...,p_m)$ \textbf{containing only} the shown propositional variables $p_1,...,p_m$, such that 
\begin{quote}
	$\T(\#(\phi_1,...,\phi_n)) = \#^\T(\T'_{1}(\phi_i,...,\phi_j)/p_1,...,\T'_{w}(\phi_h,...,\phi_l)/p_{m})$
\end{quote}
where $\{\phi_i,...,\phi_j\} \cup \{\phi_h,...,\phi_l\} \subseteq \{\phi_1,...,\phi_n\}$.
\end{definition}

	Notice that the clauses must be given for each single connective in the source logic. If there is a need to translate a composite connective, an additional clause for it should be given.

Therefore, the general-recursive translations are still structure-preserving and must be defined inductively through the formation of formulas. Later in section \ref{preserv-conn}  we argue that, together with some other conditions, general-recursive translations preserve, in a general but reasonable sense of the term, the connectives of the source in the target logic.

\paragraph*{Another issue with translated connectives}

One might insist whether the behaviour of the defined connective in the target logic would indeed be equivalent with the behavior of the original connective. Corcoran argues that this is often not the case. In \cite[p. 172]{Corcoran-TLT} he defines a notion of ``deductive strength'' which is based on the capacity of a logic to introduce and eliminate a connective occurring as a principal sign in a formula. 

Considering this notion, it can be that a given connective $\#$ of a logic $\L_1$ be definable in a logic $\L_2$ through a translation, nevertheless, the ``deductive strength'' of $\L_2$ as regards $\#$ is lower than the corresponding one in $\L_1$. For example, consider the two classical propositional logics $\CPL^{\restr\{¬,\rightarrow\}}$ and $\CPL^{\restr\{¬,\land\}}$, formulated as natural deduction systems. Translating the conditional from $\CPL^{\restr\{¬,\rightarrow\}}$ to $\CPL^{\restr\{¬, \land\}}$ one obtains the following rule of inference: from the pattern of reasoning from $\phi$ to $\psi$, infer $¬(\phi \land ¬\psi)$. Contrary to the rule for $\rightarrow$ in $\L_1$ (the usual natural deduction rule), according to Corcoran this rule of $\L_2$ is not rigorous, since it depends on the rules of the other connectives. 

Corcoran's considerations are very interesting, but we think that despite the fact that the defined connectives can lose ``deductive strength'' (in his terms) it is reasonable to say that they maintain expressive strength. The loss of ``deductive strength'' can influence other issues such as modularity, normalization, etc. but it does not affect directly expressiveness.

	Having revised the literature linked with the adequacy criterion 1 and stated our proposal, now the same will be done with respect to the adequacy criterion 2.

\subsubsection{Criterion 2- On the preservation of (some) meta-properties}

\label{Adequacy-criterion-2}

There are two important issues here:

\begin{enumerate}
\item[(i)] what is being understood as a meta-property
\item[(ii)] what does it mean for a translation to preserve a meta-property of one logic into another
\end{enumerate}

It would seem desirable to have a general formal framework so that one could give precise answers to (i) and (ii). Nevertheless, the adequacy criterion 2 asks for preservation of specific meta-properties, and not of every meta-property of a certain kind. Thus, there is no need to place them in a fixed framework. Moreover the first two (non-triviality and decidability) have simple and exact formulations, so it is straightforward to stablish whether they are preserved by a translation. 

The only meta-property whose statement and definition of preservation need elucidation is the deduction theorem.  As the framework(s) of (hyper) contextual translations\footnote{See \cite{Carnielli-Coniglio-Dottaviano-NDTBL}, \cite{Coniglio-Figallo-FFHCF} and \cite{Moreira-PHD} for a detailed presentation of both.}  offers exact answers to (i) and (ii) above, we will investigate whether they are adequate for our purposes.  In both, a logic is taken as an assertion calculus containing a set of formulas  and a set of rules of inference between sequents. The difference between the frameworks is the kind of sequent allowed.  The language of the assertion calculus includes schematic variables for sentences $\xi_1,\xi_2,...$ and set of sentences $X_1,X_2,...$, so the calculus has also substitution and instantiation rules for dealing with those.

In these frameworks, $P$ is a meta-property of a logic $\L$ defined in the above terms whenever $P$ can be formulated as an inference between sequents (or hyper-sequents), that is, if $P$ can be formulated as a derived rule of $\L$.  For example, the deduction theorem can be formulated this way: from $\Gamma, \phi \vdash \psi$, infer $\Gamma \vdash \phi \rightarrow \psi$. Now for the disjunctive property, one needs the richer framework of the hyper-sequents: from $\Gamma \vdash \phi \lor \psi$, infer $\Gamma \vdash \phi$ or infer $\Gamma \vdash \psi$.

A (hyper) contextual translation $\T: \L_1 \longrightarrow \L_2$ is a mapping that is transparent to the schematic variables such that  if $P$ is a meta-property of $\L_1$ then, $\T(P)$ is a meta-property of $\L_2$. The transparency to schematic variables implies that (hyper) contextual translations by definition preserve structural properties such as left weakening: from $X \vdash \xi$, infer $X,X' \vdash \xi$.

Consider now the finiteness property, i.e. if $\Gamma \vdash \phi$, then for a finite $\Delta \subseteq \Gamma$, $\Delta \vdash \phi$. It cannot be formulated in neither of the cited frameworks, indeed the same holds for the majority of other relevant meta-properties of logics: decidability, interpolation, cut-elimination, etc.

Despite the limitation on expressible meta-properties, the framework of (hyper) contextual translations has also a clear answer to item (ii) above: a translation $\T$ preserves a meta-property P of the source logic  if $\T(P)$ is a derived rule of the target logic.

In the sequence we use the example of the deduction theorem to argue that there can be some problems even with this strict notion of meta-property preservation.

\paragraph{A limitation of the formulation of meta-property in the framework of (hyper) contextual translations}

Even in the strict framework of (hyper) contextual translations, meta-properties are formulation-sensitive, so that one formulation of a meta-property $P$ may hold for a logic $\L$ while other formulation $P'$ fails for $\L$. A paradigmatic example is the deduction theorem. In most formulations, e.g. for classical propositional logic ($\CPL$) and intuitionistic propositional logic ($\IPL$), it is read as: If $\Gamma, \phi \vdash \psi$, then $\Gamma \vdash \phi \rightarrow \psi$. Nevertheless, only a generalized version holds for Lukasiewicz $\L^3$: If $\Gamma, \phi \vdash \psi$, then $\Gamma \vdash \phi \rightarrow (\phi \rightarrow \psi)$ \cite{Pogorzelski-DTLMVPC}. 

A similar issue occurs in systems containing proof-rules besides inference-rules,\footnote{A proof rule is of the form: from $\vdash \phi$, infer $\vdash \psi$. An inference rule is of the form: from $\phi$, infer $\psi$. The necessitation rule and generalization rules are sometimes defined as proof-rules: from $\vdash \phi$, infer $\vdash \square \phi$; from $\vdash \psi(x)$ infer $\vdash \forall x \psi(x)$. Notice that both $\phi$ and $\psi(x)$ must be theorems in their respective systems, otherwise one gets implausible inferences: from $p$ it follows $\square p$, and from $P(x)$ it follows that $\forall x P(x)$.} for example, Mendelson's $\FOL$ and modal logic with the necessitation rule. In both cases, only a modified version of the deduction theorem holds. For modal logic $K$, among other possibilities, the following deduction theorem holds \cite[p. 58]{Zeman-DT}: if $\Gamma, \phi \vdash_K \psi$, and each of the propositional variables appearing in hypothesis $\Gamma \cup \{\phi\}$ is in the scope of a modal operator, then $\Gamma \vdash_K \phi \rightarrow \psi$.\footnote{Other formulation is given in \cite{Hakli-Negri-DDTFML}: if $\Gamma, \phi \vdash_K \psi$, and the rule of necessitation is applied $m \geq 0$ times to formulas that depend on $\phi$, then $\Gamma \vdash_K (\square^0 \phi \land ... \land \square^m \phi) \rightarrow \psi$, where $\square^0 \phi = \phi$, $\square^1 \phi= \square \phi$, etc.}

For Mendelson's system, the formulation of the deduction theorem is also clumsy \cite[p. 80]{Mendelson-IML}: ``Assume that, in some deduction showing that $\Gamma, \phi \vdash \psi$, no application of [the generalization rule] to a wff that depends upon $\phi$ has as its quantified variable a free variable of $\phi$. Then, $\Gamma \vdash \phi \rightarrow \psi$.''

So what is a deduction theorem? According to Zeman, the general statement of it might be \cite[p. 56]{Zeman-DT} 

\begin{definition}[DT] If there is a proof from the hypotheses $\phi_1,...,\phi_n$ for the formula $\psi$, then there is a proof from the hypotheses $\phi_1,...,\phi_{n-1}$ for the formula $\phi_n \supset \psi$.
\end{definition}

For Zeman, the problem of the formulation of the deduction theorem for each system lies in the proper understanding in the system of what it is meant by a ``proof from hypotheses''. Thus, the different results cited above for $\L^3$, modal logic and Mendelson's $\FOL$ are different ways ---seemingly equivalent modulo the specificities of each system--- of capturing the idea of DT above.

The situation is explained by Hakli and Negri \cite{Hakli-Negri-DDTFML} as follows. For \emph{some} logics, either one modifies their rules in order for them to deal adequately with assumptions, and get the ``standard formulation'' of the deduction theorem, or leave the rules from the logic intact and obtain a ``non-standard'' form of the deduction theorem.\footnote{Although the moral of the story holds for first-order logic, as shown above, they only mentioned modal logics. Nevertheless, we do not know of any such rectification for the formulation of the deduction theorem in Lukasiewicz $\L^3$.}

Now let us come back to the issue of preservation of meta-properties by translations. Consider $\IPL$ and modal logic $S4$,  presented in the framework of (hyper) contextual translations, e.g. both equipped with a common set of propositional variables $p_1,p_2,...$ and schematic variables  $\xi_1,\xi_2,..., X_1,X_2,...$, for formulas and sets of formulas, respectively, etc.  Consider Gödel's translation $\T^g:\IPL \longrightarrow S4$ (defined to be literal to schematic variables):

\begin{multicols}{2}
\begin{description}[noitemsep]
\item $\T^g(p_i) = \square p_i$
\item $\T^g(X_i) = X_i$
\item $\T^g(\xi_i) = \xi_i$
\item $\T^g(¬\phi) = \square ¬\T^g(\phi)$
\item $\T^g(\phi \rightarrow \psi)= \square(\T^g(\phi) \rightarrow \T^g(\psi))$
\item literal for $\bot,\land,\lor$.
\end{description}
\end{multicols}

Then the deduction theorem for $\IPL$ is defined as the following meta-property 

\vspace{1mm}
\begin{minipage}{0.2\textwidth}
$(P)$
\end{minipage}
\begin{minipage}{0.7\textwidth}
if $X, \xi_1 \vdash \xi_2$, then $X \vdash \xi_1 \rightarrow \xi_2$.
\end{minipage}\vspace{1mm}

Carnielli et al. (\cite[p. 13]{Carnielli-Coniglio-Dottaviano-NDTBL}) notice that $\T^g$ above is not a contextual translation since $S4$ does not satisfy\vspace{1mm}

\begin{minipage}{0.2\textwidth} 
$\T^g(P)$
\end{minipage}
\begin{minipage}{0.7\textwidth}
if $X, \xi_1 \vdash \xi_2$, then $X \vdash \square(\xi_1 \rightarrow \xi_2)$. 
\end{minipage}\vspace{2mm}

To see why, instantiate $X$ to $p_1 \rightarrow p_2$ and $\xi_i$ to $p_i$, for $i \in \{1,2\}$. In $S4$ it holds that $p_1 \rightarrow p_2, p_1 \vdash p_2$, but it does not hold that $p_1 \rightarrow p_2 \vdash \square(p_1 \rightarrow p_2)$.

One can see clearly that this is caused by the transparency given in $\T^g$ to the schematic variables of $P$. If $P$ were formulated in terms of non-schematic formulas, e.g.\vspace{1mm}

\begin{minipage}{0.2\textwidth} 
$(P')$
\end{minipage}
\begin{minipage}{0.7\textwidth}
if $\Gamma, p_1 \vdash p_2$, then $\Gamma \vdash p_1 \rightarrow p_2$,
\end{minipage}\vspace{2mm}

then its translation $\T^g(P')$ into $S4$ would be \vspace{2mm}

\begin{minipage}{0.2\textwidth} 
$\T^g(P')$
\end{minipage}
\begin{minipage}{0.7\textwidth}
if $\T^g[\Gamma],\square p_1 \vdash \square p_2$, then $\T^g[\Gamma] \vdash \square(\square p_1 \rightarrow \square p_2)$,
\end{minipage}\vspace{2mm} 

which is satisfied in $S4$.

Although $P$ is a correct formulation of DT (see above) for $\IPL$, $\T^g(P)$ is not the correct formulation of DT for $S4$. Therefore, the claim that contextual and hyper-contextual translations preserve the meta-properties of logics (expressible in the framework) is not entirely justified. The opacity given for the schematic variables may give a``false negative'' as regards the presence of some meta-property in the target logic, this would prevent the definition of such translations. Therefore this framework is not adequate for our purposes.

\paragraph{General statement and preservation of the deduction theorem}

Recall our discussion on the deduction theorem. We saw that there are many formulations of it, and it depends on how the notion of proof from assumptions is treated in each logic. Now to talk about the preservation of deduction theorem through the translations, we have to give a sufficiently general formulation of it, but such that it still carries the spirit of Zeman's definition. Let us give it a more direct formulation:

\begin{definition}[\textsc{standard} deduction theorem]

A logic $\L_1$ has the \textsc{standard} deduction theorem whenever it holds that $\phi_1,...,\phi_n \vdash_{\L_1} \psi$ if and only if $\phi_1,...,\phi_{n-1} \vdash_{\L_1} \phi_n \rightarrow \psi$.
\end{definition}

The general formulation has to be lax enough so as to enable one to say that, for example, the translation $\T^{l}:\CPL \longrightarrow \L^3$ preserves the deduction theorem, since it holds that ``if $\Gamma, \phi \vdash_{\L^3} \psi$, then $\Gamma \vdash_{\L^3} \phi \rightarrow (\phi \rightarrow \psi)$''; analogously for the translation of $\IPL$ into $S4$. The general version of the deduction theorem we propose is the following:

\begin{definition}[\textsc{general} deduction theorem]

	A logic $\L_1$ has the \textsc{general} deduction theorem whenever $\phi_1,...,\phi_n \vdash_{\L_1} \psi$ iff $\phi_1,...,\phi_{n-1} \vdash_{\L_1} \alpha^{\rightarrow}(\phi_n,\psi)$, where $\alpha^{\rightarrow}$ is an $\L_1$-formula, with one or more occurrences of $\phi_n$ and $\psi$. 

\end{definition}

In abstract algebraic logic this formulation is known as the uniterm global deduction-detachment theorem \cite[p. 36]{Font-Jansana-Pigozzi-SAAL}.

\begin{definition}[preservation of the general deduction theorem]

	A translation $\T:\L_1 \longrightarrow \L_2$ is said to preserve the general deduction theorem whenever $\L_1$ has the standard deduction theorem and $\T(\L_1)$ has the general deduction theorem.  
\end{definition}
The case where $\L_1$ satisfies only the general deduction theorem is more complex, as it will be seen below.

\subsection{$expressiveness_{gg}$: a sufficient condition for expressiveness}

In the adequacy criteria we proposed for expressiveness, there appears two informal necessary conditions: preserving the connectives and behaving in the appropriate way as regards the selected meta-properties. The other condition of being a non-trivial pre-order is already given precisely. The first two conditions are open to interpretation, so we proposed a precise formulation of the minimal requirements such interpretations would have to satisfy. This amounted on requiring the translation to preserve the general deduction theorem, and to be back-and-forth general-recursive.

The one-way mappings between logics are a very weak in the sense they are almost omnipresent, so that requiring back-and-forth mappings as a formal necessary condition for  expressiveness is rather uncontroversial. Nevertheless, requiring structure-attentive translations in order to preserve the connectives has been questioned by Mossakowski et al. as we have seen above. They proposed other way to preserve the connectives without requiring such translations. We think this approach has some downsides and proposed a different one, based on general-recursive translations.

Being a general-recursive mapping is a relatively weak condition on translations. If some translation does not comply with it is because at least some connective of the source logic is only translated ``globally", i.e. a formula containing it is translated as a whole, and the translation ignores its eventual sub-formulas, e.g. Glivenko's double negation translation of $\CPL$ into $\IPL$ \cite{Glivenko-SQPLB}.

Thus, it is reasonable to require general-recursiveness as a formal necessary condition for expressiveness, along with the  back-and-forth condition. One very important issue to be dealt with in a future work is already pointed out by Mossakowski et al. (\cite{Mossakowski-WILT}): this minimal notion of structure preservation is up to now only defined for propositional logics. It is to be investigated how it should deal with quantifiers. Notice, however, that this limitation does not weaken the necessary character of general-recursive translations, as an eventual wider approach should include it.

Before we present a sufficient formal criterion for our concept, whose content surely is no surprise by now, some adjustments must be made concerning the preservation of the general deduction theorem. The general statement of the deduction theorem still involves compositionality: the formula $\alpha^{\rightarrow}(\phi_n,\psi)$ at issue should have $\phi_n$ and $\psi$ as sub-formulas. In order to assure this, we have to define a slightly stricter notion of general-recursive translation, which we call general-recursive$^C$:

\begin{definition}[general-recursive$^C$] A translation $\T$ is ge\-ne\-ral\--re\-cur\-sive$^C$ iff $\T$ is general-recursive and it is compositional for the conditional symbol, that is, for a formula $\phi \rightarrow \psi$ in the source logic and a template-formula $C^\T(p_1,...,p_n)$ in the target logic, $\T(\phi \rightarrow \psi)= C^\T(\T(\phi),...,\T(\psi))$. Thus the translated formula may contain as sub-formulas one or more occurrences of $\T(\phi)$ and $\T(\psi)$.
\end{definition}

The restriction on general-recursive translations is intended to assure that for the logics satisfying the standard deduction theorem, at least the translation clause for the conditional is compositional. This will rule out clauses such as $\T(\phi \rightarrow \psi)= C^\T(\T_{i_1}(\phi),...,\T_{i_n}(\phi),...,\T_{j_1}(\psi),...,\T_{j_n}(\psi))$, for $\T_{ij}$ different from $\T$. Otherwise, it could happen that the resulting translation $C^\T$ of the conditional $\phi \rightarrow \psi$ does not contain $\T(\phi)$ and $\T(\psi)$ as sub-formulas. Then it would not be reasonable to say that such formula $C^\T$ expresses the deductibility relation between $\T(\phi)$ and $\T(\psi)$.

Now we present a sufficient criterion for expressiveness

\begin{definition}[$expressiveness_{gg}$]

	A logic $\L_2$ is at least as $expressive_{gg}$ as $\L_1$ if and only if  there is a back-and-forth general-recursive (for short, B\&F-GR) translation $\T$ from $\L_1$ to $\L_2$, such that $\T$ does not require model-mappings. If $\L_1$ satisfy the standard deduction theorem, then $\T$ must be general-recursive$^C$.

\end{definition}

Below it will be shown that $expressiveness_{gg}$ satisfies the adequacy criteria given above.

\subsubsection{Adequacy criterion 1}
\label{preserv-conn}

	As we mentioned before (section \ref{definability-of-connectives}) there is some consensus in the literature that preservation of connectives requires at least compositional back-and-forth translations. The various non-compositional translations show that we could have a wider notion of preservation of connectives. We now argue that through general-recursive translations and some other conditions, it is still guaranteed that whatever can be said in terms of the source connectives can be said in terms of the target connectives.

	\begin{proposition}[Connetive preservation (general sense)] If a translation $\T: \L_1 \longrightarrow \L_2$  is back-and-forth and general-recursive (B\&F-GR), and $\T$ does not require model translations to convey the meaning of some connective in $\L_1$, then the connectives of $\L_1$ are preserved (in a general sense) in $\L_2$.
\end{proposition}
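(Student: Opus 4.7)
The plan is to show that the three hypotheses jointly implement the informal reading of connective preservation. First, I would spell out what ``generally preserved'' is meant to amount to: for each $n$-ary connective $\#$ of $\L_1$, there must be a designated $\L_2$-template $\#^\T$ such that (i) every $\L_1$-formula whose principal connective is $\#$ is translated into an $\L_2$-formula whose principal structural feature is $\#^\T$, and (ii) the deductive behaviour of $\#$ in $\L_1$ is matched by the deductive behaviour of $\#^\T$ in $\L_2$, in both directions, with no extra-logical help on the target side.

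Clause (i) would be read off directly from the general-recursive condition. By its definition, for each source connective $\#$ there is a target template $\#^\T(p_1,\ldots,p_m)$ containing only the exhibited variables, together with auxiliary mappings $\T'_{1},\ldots,\T'_{w}$, such that
\[
\T(\#(\phi_1,\ldots,\phi_n)) \;=\; \#^\T\bigl(\T'_{1}(\ldots)/p_1,\ldots,\T'_{w}(\ldots)/p_m\bigr).
\]
Thus every source connective has a dedicated target schema; unlike the conservative translations produced in $\S 3$, nothing is ``globalized'' beyond the boundary of a single connective, and no source connective is erased into an undifferentiated propositional variable.

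Clause (ii) I would obtain from the back-and-forth condition. Any derivability involving $\#$ in $\L_1$---be it a theoremhood $\vdash_{\L_1}\chi$ or a consequence $\Gamma\vdash_{\L_1}\chi$ with $\chi$ built around $\#$---is transferred by $\T$ to the corresponding claim in $\L_2$ built around $\#^\T$, and the converse direction lets us read $\L_1$-facts back off from $\L_2$-facts. So everything one can say, deductively, around $\#$ in $\L_1$ has a faithful counterpart said around $\#^\T$ in $\L_2$. The no-model-mapping hypothesis is what seals the deal: without it, the counter-examples of $\S 2$ (Garc\'{\i}a-Matos--V\"a\"an\"anen, the Epstein embedding of $\R$ into $\CPL$, Mossakowski et al.'s model conversion for $\WPL\leftrightarrow\CPL$) show that a clever $f$ can inject expressive content into the target structures that the target logic's own connectives do not supply, creating a false impression of preservation.

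The principal obstacle is that the conclusion is informal (``preserved in a general sense''), so the argument is justificatory rather than calculational, and an opponent could always demand a stricter reading. My plan is therefore to proceed by exclusion: enumerate the recognised ways a translation can fail to preserve connectives---namely, translating whole sub-formulas globally, smuggling content through a model conversion, and failing to match derivability in one of the two directions---and then observe that each such failure is ruled out by exactly one of the three hypotheses (general-recursiveness, absence of model-mappings, back-and-forth). The delicate point is fixing the target notion of ``general preservation'' so that it is strong enough to exclude the $\S 2$ pathologies but weak enough to admit the non-compositional translations (such as those from $\IPL$ to $S4$ or Statman's into the implicational fragment) that motivated moving beyond compositionality in the first place; I expect to appeal back to Adequacy Criterion 1 and the informal discussion in $\S\ref{definability-of-connectives}$ to pin this reading down.
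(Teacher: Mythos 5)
Your proposal matches the paper's own justification in both structure and substance: the paper refines Adequacy Criterion 1 into a clause ($\alpha$) requiring, for each source connective, a target schema $\delta^{\otimes}$ with similar deductive behaviour, then argues that general-recursiveness supplies the schema, the back-and-forth condition supplies the deductive match, and the no-model-mapping clause is needed precisely because Epstein's $\T^E:\R\longrightarrow\CPL$ is B\&F-GR yet smuggles the relatedness implication through the model conversion --- exactly the decomposition you give as clauses (i), (ii), and the ``seals the deal'' observation. Your framing by exclusion of failure modes is only a presentational variant of the same argument, and your caveat about the informality of ``preserved in a general sense'' mirrors the paper's own hedged conclusion that it is ``reasonable to say'' preservation holds.
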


	The back-and-forth condition shall mean either a theoremhood- or de\-ri\-va\-bi\-li\-ty\--preserving translation, depending on whether one is considering formula logics or Tarskian logics, respectively. A back-and-forth translation assures a certain similarity between the global deductive behaviour of the source and  target formulas. Though, as Je{\v r}ábek's result shows, it is not enough for any reasonable notion of connective preservation and there must be some extent of structure preservation.  In this sense, the advantage of compositional translations is that they are particularly regular, so that each connective in the source  is associated to a fixed schema in the target logic, and the translation clauses are clearer. But this can be also a limitation on the means of translation, comparable to restricting translations in ordinary language to word-to-word mappings. 

	There are many cases of logics where the translation of certain operators must consider their context, so that they have to be translated in block. We gave before some examples, for another one consider Balbani and Herzig's \cite{Balbiani-Herzig-TMLPK4} translation $\T^{bh}$ of modal provability logic G into K4. For $\T^{bh}$ the result of translating $\square \phi$ also depends on whether it occurs in the scope of a negation sign:  $\T^{bh}(¬\square p)= ¬\square p$, but $\T(\square p) = \square (\square p \rightarrow p)$. These cases cannot be captured in compositional translations and can only be dealt with in more complex non-compositional ones.

The following clause is proposed as a refinement of adequacy criterion 1, capturing more precisely when a connective or group of connectives is generally preserved by a translation from $\L_1$ to $\L_2$.
\vspace{2mm}

\hspace{-7mm}\begin{minipage}{0.1\textwidth}
($\alpha$)
\end{minipage}
\begin{minipage}{0.9\textwidth}
	for each $n$-ary (composite) connective $\otimes$ in $\L_1$ and  $\L_1$-formulas $\phi_1,...,\phi_n$, there must be $\L_2$-formulas $\delta^{\otimes}(p_1,...,p_m)$ (possibly $m \not = n$) and $\psi_1,...,\psi_m$ such that  $\otimes(\phi_1, ..., \phi_n)$ has a  similar deductive behaviour with $\delta^{\otimes}(\psi_1/p_1,...,\psi_m/p_m)$. 
\end{minipage} \vspace{2mm}

	It is easy to see that back-and-forth general-recursive (B\&F-GR) translations satisfy clause ($\alpha$). In general-recursive translations, every connective of the source logic must be given inductive translation clauses, and translations for composite connectives may be given either as extra clauses, or by means of auxiliary translations. Thus, for a $n$-ary (composite) connective $\otimes$ in $\L_1$, the formula $\otimes(\phi_1,...,\phi_n)$ must be mapped by a GR-translation to a formula $\delta^{\otimes}(\psi_1,...,\psi_m)$, where each $\psi_i$ is obtained from the translation of some $\phi_k$. Now if the translation is back-and forth, then $\otimes(\phi_1,...,\phi_n)$ will have a similar deductive behaviour with $\delta^{\otimes}(\psi_1,...,\psi_m)$, since $\Gamma \vdash_{\L_1} \otimes(\phi_1,...,\phi_n)$ iff $\T(\Gamma) \vdash_{\L_2} \delta^{\otimes}(\psi_1,...,\psi_m)$.

	Nevertheless, the satisfaction of ($\alpha$) is still not enough guarantee for general preservation of connectives, as Epstein's translation $\T^E:\R \longrightarrow \CPL$ we saw above (section \ref{problem-expressiveness-g}) is B\&F-GR.  We asserted then that the relatedness implication ``$\rightarrow$'' is not expressible in $\CPL$. The issue is that $\T^E$ uses the backdoor of the model-mapping to make the translated formula  $(p \supset q) \land d_{p,q}$ true whenever the source formula $p \rightarrow q$ is true. However, if uninterpreted, the translated formula does not have the same meaning as the original formula.  Thus, the meaning of the relatedness implication is not really expressed in terms of the connectives of $\CPL$. 
	
	Thus, the last part of the proposition above is intended to force the source connectives to be defined entirely in terms of the target connectives and not smuggled by the model-translations.\footnote{This might be seen as forcing the connectives in a certain logic to be given first an adequate set of axioms/rules of inference in order to be translatable. This would agree with Zucker maxim that the meanings of the connectives must not be imposed from the outside \cite[p. 518]{Zucker-APCL}. Nevertheless, this restriction does not prohibit non-axiomatizable model-theoretic logics to be translated into each other. For example, the identity mapping from $\L(Q_0)$ to $\L(Q_0,Q_1)$ is a perfectly reasonable translation and would comply with the criterion above.}

	Therefore, if $\T: \L_1 \longrightarrow \L_2$ is B\&F-GR, then the clause ($\alpha$) is satisfied. If besides the translations of the connectives are not aided by model-mappings, then it is reasonable to say that everything expressible in terms of the  connectives of $\L_1$ are expressible in terms of the connectives of $\L_2$.

	In the beginning of the section we cited proposals for preservation of connectives via translations by Wójcicki, Epstein and Mossakowski et al. The proposal above is much weaker than the first two. As regards the Mossakowski et al.'s, this approach is both weaker and stronger:  stronger since it requires some preservation of structure; and weaker since it does not require the preservation of the proof-theoretic connectives.\vspace{2mm}

\subsubsection{Adequacy criterion 2}

\paragraph{Deduction-theorem}

In the adequacy criteria we asked that if a logic $\L_1$ has the standard deduction theorem, and $\L_2$ is at least as expressive as $\L_1$, then the language fragment of $\L_2$ as expressive as $\L_1$ has the general deduction theorem. We formulated above the standard deduction theorem  and a general version of it. As it will be seen below, in order to guarantee the preservation of  the general deduction theorem, the source logic must have the standard deduction theorem.  We fist remark that conservative general-recursive$^C$ translations preserve the general deduction theorem:

\begin{proposition} Let $\L_1$ with conditional symbol ``$\rightarrow$" satisfy the \textsc{standard} deduction theorem. If $\T: \L_1 \longrightarrow \L_2$ is a conservative ge\-ne\-ral-re\-cur\-si\-ve$^C$ translation, then $\T(\L_1)$ has the general deduction theorem.
\end{proposition}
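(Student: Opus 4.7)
The plan is to chain together the biconditional of conservativity, the standard deduction theorem in $\L_1$, and the compositional clause for $\rightarrow$ given by general-recursive$^C$, in order to extract the required witness formula $\alpha^\rightarrow$ directly from the template $C^\T$.

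Concretely, I would start from a derivation $\T(\phi_1),\dots,\T(\phi_n) \vdash_{\L_2} \T(\psi)$ in $\T(\L_1)$. Since $\T$ is conservative, this is equivalent to $\phi_1,\dots,\phi_n \vdash_{\L_1} \psi$. Because $\L_1$ satisfies the \textsc{standard} deduction theorem and contains the conditional symbol ``$\rightarrow$'', this in turn is equivalent to $\phi_1,\dots,\phi_{n-1} \vdash_{\L_1} \phi_n \rightarrow \psi$. Applying conservativity once more (in the opposite direction) yields $\T(\phi_1),\dots,\T(\phi_{n-1}) \vdash_{\L_2} \T(\phi_n \rightarrow \psi)$.

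Now the key step: by general-recursiveness$^C$, the translation is compositional for the conditional, so there is a fixed $\L_2$-template $C^\T(p_1,p_2)$ such that $\T(\phi_n \rightarrow \psi) = C^\T(\T(\phi_n),\T(\psi))$, and by the definition the template contains (one or more occurrences of) $\T(\phi_n)$ and $\T(\psi)$ as sub-formulas. Setting $\alpha^\rightarrow(x,y) := C^\T(x,y)$ gives a formula with one or more occurrences of its two arguments, so the derivation we obtained is precisely of the form required by the \textsc{general} deduction theorem for $\T(\L_1)$. Reversing every step of the chain (each was a biconditional) gives the converse direction and establishes the general deduction-detachment schema.

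I do not expect a real obstacle here: the argument is essentially a two-line chain of ``iff'' steps, and the whole point of the extra clause in the definition of general-recursive$^C$ is precisely to make the witness $\alpha^\rightarrow$ fall out of $\T(\phi_n \rightarrow \psi)$ with $\T(\phi_n)$ and $\T(\psi)$ visible as sub-formulas. The only mildly delicate point to make explicit in the write-up is that, without the compositionality restriction on the $\rightarrow$-clause, $\T(\phi_n \rightarrow \psi)$ could have been built from auxiliary translations $\T'_i(\phi_n),\T'_j(\psi)$ rather than from $\T(\phi_n),\T(\psi)$ themselves, and in that case the resulting formula would not qualify as $\alpha^\rightarrow(\T(\phi_n),\T(\psi))$ in the sense of the general deduction theorem; this is exactly what motivated introducing general-recursive$^C$ in the first place.
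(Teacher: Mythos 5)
Your proposal is correct and follows essentially the same route as the paper's own proof: the identical chain of ``iff'' steps (conservativity, \textsc{standard} deduction theorem in $\L_1$, conservativity back), followed by reading off the witness $\alpha^{\rightarrow}$ from the compositional $\rightarrow$-clause guaranteed by general-recursiveness$^C$. Your closing remark about why the compositionality restriction is needed matches the paper's own motivating discussion preceding the definition.
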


\begin{proof}

Let the hypotheses of the proposition be satisfied. \\
Then $\T(\phi_1)$,...,$\T(\phi_n)\vdash_{\L_2} \T(\psi)$ iff $\phi_1,...,\phi_n \vdash_{\L_1} \psi$ iff $\phi_1,...,\phi_{n-1} \vdash_{\L_1} \phi_n \rightarrow \psi$, (by the standard deduction theorem) iff $\T(\phi_1),...,\T(\phi_{n-1}) \vdash_{\L_2} \T(\phi_n \rightarrow \psi)$.

By the definition of general-recursive$^C$ translations, $\T(\phi_n \rightarrow \psi)$ is a formula containing one or more occurrences of $\T(\phi_n)$ and $\T(\psi)$.   Thus, the image of $\L_1$ under $\T$ have a general deduction theorem.

\end{proof}

To preserve the general deduction theorem the source logic must have the stronger one. If $\L_1$ only satisfies the general deduction theorem and $\T: \L_1 \longrightarrow \L_2$ is B\&F-GR, we cannot guarantee that $\T(\L_1)$ satisfies the general deduction theorem.

If $\L_1$ satisfies the general deduction theorem, then $\phi_1,...,\phi_n \vdash_{\L_1} \psi$ iff it holds that $\phi_1,...,\phi_{n-1} \vdash_{\L_1} \delta^{\rightarrow}(\phi_n, \psi)$, for some $\L_1$-formula $\delta^{\rightarrow}$.  Then we have that $\T(\phi_1),...,\T(\phi_n) \vdash_{\L_2} \T(\psi)$ iff $\T(\phi_1),...,\T(\phi_{n-1}) \vdash_{\L_2} \T(\delta^{\rightarrow})$. As $\delta^{\rightarrow}$ contains $\phi_n$ and $\psi$ as sub-formulas and $\T$ is general-recursive, $\T(\delta^{\rightarrow})$ will contain $\T_{j1}(\phi_n),...,\T_{jn}(\psi)$ as sub-formulas. If $\T_{j1},...,\T_{jn}$ are equal to $\T$, then the compositionality  of the deduction theorem is saved and $\T(\L_1)$ has also a general deduction theorem. Else, if $\T_{j1},...,\T_{jn}$ are distinct from $\T$, then the general-deduction theorem is not preserved in $\T(\L_1)$.

Therefore the present approach is limited in that the source logics to be analysed in terms of expressiveness have to be put in ``proper form'' so that they satisfy the standard deduction theorem or an even wider version of it must be defined, dropping the compositionality requirement.

\paragraph{Non-triviality}

A logic $\L$ is non-trivial if for some $\L$-formulas $\phi$ and $\psi$ it holds that $\phi \not \vdash_\L \psi$. By the adequacy criteria, a trivial logic cannot be more expressive than any logic. Thus, we have to make sure that a translation intended to induce expressiveness must \emph{reflect} triviality or, alternatively, preserve non-triviality.

That is, if $\T:\L_1 \longrightarrow \L_2$, and $\L_2$ is trivial, then $\L_1$ is trivial. 

\begin{proposition}[triviality reflection] All back-and-forth translations reflect triviality.
\end{proposition}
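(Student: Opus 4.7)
The plan is to unfold the definition of back-and-forth and treat its two clauses (theoremhood preserving, conservative) as separate cases, in each case using the corresponding ``iff'' to transfer triviality of $\L_2$ back to $\L_1$.

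First I would fix a translation $\T:\L_1 \longrightarrow \L_2$ and assume $\L_2$ is trivial. For Tarskian logics, triviality of $\L_2$ means $\alpha \vdash_{\L_2} \beta$ for every pair of $\L_2$-formulas $\alpha,\beta$; for formula logics, it means $\vdash_{\L_2} \alpha$ for every $\L_2$-formula $\alpha$. I would then consider the two sub-cases of ``back-and-forth''.

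\emph{Case 1 (conservative).} Pick arbitrary $\L_1$-formulas $\phi,\psi$. Because $\L_2$ is trivial, $\T(\phi) \vdash_{\L_2} \T(\psi)$, and in fact $\T(\Gamma) \vdash_{\L_2} \T(\psi)$ for every $\Gamma$. Applying the conservativity biconditional $\Gamma \vdash_{\L_1} \phi$ iff $\T(\Gamma) \vdash_{\L_2} \T(\phi)$ in the backwards direction yields $\phi \vdash_{\L_1} \psi$. Since $\phi,\psi$ were arbitrary, $\L_1$ is trivial.

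\emph{Case 2 (theoremhood preserving).} Pick any $\L_1$-formula $\phi$. Then $\T(\phi)$ is an $\L_2$-formula, hence $\vdash_{\L_2} \T(\phi)$ by triviality of $\L_2$. The biconditional $\vdash_{\L_1} \phi$ iff $\vdash_{\L_2} \T(\phi)$ then gives $\vdash_{\L_1} \phi$, so every $\L_1$-formula is a theorem and $\L_1$ is trivial. Both cases combined deliver the proposition.

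The proof is essentially a one-line unwrapping of definitions; there is no real obstacle. The only subtlety worth flagging is that ``trivial'' has a slightly different reading for Tarskian logics and for formula logics, so one should be careful to match the version of back-and-forth with the appropriate notion of triviality; once that bookkeeping is in place, the implication is immediate from the ``only if'' direction of the back-and-forth biconditional.
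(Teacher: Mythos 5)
Your proof is correct and is exactly the immediate definition-unwrapping the paper has in mind (the paper in fact states this proposition without proof, treating it as obvious): triviality of $\L_2$ gives the right-hand side of the back-and-forth biconditional, and the right-to-left direction transfers it to $\L_1$. Your remark about matching the notion of triviality (every formula a theorem, for formula logics; $\phi \vdash \psi$ for all pairs, for Tarskian logics) to the corresponding clause of back-and-forth is the only point of care, and you handle it properly.
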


\paragraph{Undecidability}

We commented before that the presence of decidability could be used as an indicator of the adequacy of our definition. This is because decidability is a limitation of expressiveness of a logic. Thus the condition requires that
\begin{quote} if $\L_1$ is undecidable and $\L_2$ is decidable, then $\L_2$ is not more expressive than $\L_1$. \end{quote} 
The result below shows this condition is normally satisfied. 

\begin{proposition}[\cite{Feitosa-Dottaviano-CT}] If $\L_1$ is undecidable, then there is no computable back-and-forth translation $\T:\L_1 \longrightarrow \L_2$, where $\L_2$ is decidable.
\end{proposition}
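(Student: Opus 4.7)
The plan is a straightforward proof by contraposition: I will assume such a computable back-and-forth translation $\T: \L_1 \to \L_2$ exists and combine it with the decision procedure of $\L_2$ to construct a decision procedure for $\L_1$, contradicting the hypothesis that $\L_1$ is undecidable.

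More concretely, recall that a back-and-forth translation is either theoremhood preserving, i.e. $\vdash_{\L_1} \phi$ iff $\vdash_{\L_2} \T(\phi)$, or conservative, i.e. $\Gamma \vdash_{\L_1} \phi$ iff $\T(\Gamma) \vdash_{\L_2} \T(\phi)$. I would split the argument accordingly. In the theoremhood-preserving case, given an arbitrary $\L_1$-formula $\phi$, I first compute $\T(\phi)$ (possible by hypothesis that $\T$ is computable) and then apply the decision procedure for theoremhood in $\L_2$ (which exists, as $\L_2$ is decidable) to $\T(\phi)$. The back-and-forth biconditional ensures this uniformly decides $\vdash_{\L_1} \phi$, so $\L_1$ would be decidable, contradicting the hypothesis.

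In the conservative case, given a finite instance $\Gamma \cup \{\phi\}$ of the consequence relation of $\L_1$, I compute $\T[\Gamma] \cup \{\T(\phi)\}$ pointwise (again using computability of $\T$) and ask the decision procedure for $\L_2$ whether $\T(\Gamma) \vdash_{\L_2} \T(\phi)$. Conservativity yields the answer for $\Gamma \vdash_{\L_1} \phi$, again contradicting undecidability of $\L_1$. The only mildly delicate point, and the main place to be careful, is to confirm that ``decidable'' for $\L_2$ is taken in the appropriate sense: decidability of theoremhood in the first case and decidability of the (finitary) consequence relation in the second. Since the notion of undecidability assumed for $\L_1$ matches the notion of back-and-forth translation at hand, this matching is automatic and the argument goes through without further work.
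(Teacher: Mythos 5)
Your argument is correct and is exactly the standard reduction one expects here: compose the computable translation with the decision procedure for $\L_2$ to decide $\L_1$, contradicting its undecidability. The paper does not prove this proposition itself (it cites Feitosa--D'Ottaviano), but your contrapositive argument, including the case split between theoremhood-preserving and conservative translations and the remark about matching the relevant notion of decidability, is precisely the intended proof.
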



If a logic $\L_1$ is undecidable, $\L_2$ is decidable and $\T: \L_1 \longrightarrow \L_2$ is B\&F, then it follows that $\T$ would not be computable. In this case, $\T$ apparently would not be general-recursive.

\subsubsection{Adequacy criterion 3}

\paragraph{Non-trivial pre-order}

The non-triviality part of the pre-order is already fulfilled by a proposition above: there are two logics $\L_1,\L_2$, such that  $\L_1$ is non-trivial, $\L_2$ is trivial and there is no back-and-forth translation from $\L_1$ to $\L_2$. We have to prove that back-and-forth general-recursive (recall the abbreviation B\&F-GR) translations are transitive, that they are reflexive is clear.

If $\T: \L_1 \longrightarrow \L_2$ is surjective back-and-forth and $\T': \L_2 \rightarrow \L_3$ is back-and-forth, then $\T' \circ \T: \L_1 \longrightarrow \L_3$ is a back-and-forth translation \cite{Feitosa-Dottaviano-CT}.

Now if $\T:\L_1 \longrightarrow \L_2$ and $\T':\L_2 \longrightarrow \L_3$ are both B\&F-GR, being $\T$ additionally surjective, then $\T' \circ \T$ is also B\&F-GR. To see it, let $C_1,...,C_n$ and $C'_1,...,C'_n$ be the translation clauses for $\T$ and $\T'$, respectively. By surjectivity, each $\L_2$-formula is reached by some $\L_1$-formula through the applications of $C_1,...,C_n$. Thus to obtain a general-recursive mapping, is just to combine the application of the two set of clauses. Take a $\L_1$-formula $\phi$ and obtain through $C_1,...C_n$ an $\L_2$-formula $\T(\phi)$. Now apply $C'_1,...,C'_n$ to $\T(\phi)$ to obtain an $\L_3$-formula $\T'(\T(\phi))$. This translation is B\&F, and is general-recursive$^C$, since it is obtained through the clauses $C_1,...,C_n,C'_1,...,C'_n$.

Nevertheless, the surjectiveness requirement may be difficult to comply with, if one is comparing increasingly expressive logics (through language extension), e.g. propositional logic, modal logic, first-order logic. We should find a way to guarantee that whenever $\T: \L_1 \longrightarrow \L_2$ and $\T': \L_2 \longrightarrow \L_3$ are B\&F-GR, then there is a B\&F-GR translation $\T^*$ (not necessarily $\T' \circ \T$) from $\L_1$ to $\L_3$.

Let us suppose $\T:\L_1 \longrightarrow \L_2$ is a non-surjective B\&F-GR and that $\T': \L_2 \longrightarrow \L_3$ is B\&F-GR. There is naturally a weakened version $\T^{w}$ of $\T' \circ \T$, the weaker part being in the way back of $\L_3$-formulas to $\L_1$-formulas. That is, for $\L_1$-formulas $\phi_1,\psi_1$, it holds that if $\phi_1 \vdash_{\L_1} \psi_1$, then $\T'(\T(\phi_1)) \vdash_{\L_3} \T'(\T(\psi_1))$. But the converse direction only holds partially.

In order that the converse direction hold, instead of normally taking $\L_3$-formulas $\phi_3,\psi_3$ in the range of $\T'(\L_2)$, one has to take $\L_3$-formulas in the intersected range of $\T' \circ \T$. For such $\L_3$-formulas $\phi_3,\psi_3$, with $\T' \circ \T(\phi_1) = \phi_3$ and $\T' \circ \T(\psi_1)= \psi_3$ for some $\L_1$-formulas $\phi_1,\psi_1$, it holds that if $\phi_3 \vdash_{\L_3} \psi_3$, then $\phi_1 \vdash_{\L_1} \psi_1$.

But this is exactly what we wanted. The backward direction should hold only for those $\L_3$ formulas that are linked with $\L_1$-formulas through $\L_2$-formulas. To translate $\L_1$-formulas into $\L_3$ formulas $\T^w$ takes only the $\T'$-clauses $C'_1,...C'_n$ involved in translating $\T(\L_1)$ formulas.

Thus, this weakened version of $\T' \circ \T$ will suffice for us to conclude that whenever there are B\&F-GR translations $\T:\L_1 \longrightarrow \L_2$ and $\T':\L_2 \longrightarrow \L_3$, then there is a B\&F-GR translation $\T^w:\L_1 \longrightarrow \L_3$.

There remains the question whether this $\T^w$ will preserve the general deduction theorem. This will happen whenever $\T$ is compositional for ``$\rightarrow$'' and the translation clause of $\T'$ for the formula $\T(\phi \rightarrow \psi)$ is compositional. Then $\T'(\T(\phi \rightarrow \psi))$ is an $\L_3$-formula containing $\T'(\T(\phi))$ and $\T'(\T(\psi))$ as sub-formulas, which implies that $\T^w$ preserves the general deduction theorem. 

Therefore we have that

\begin{proposition}
Back-and-forth general-recursive translations form a non-trivial pre-order on logics.
\end{proposition}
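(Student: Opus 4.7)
The plan is to verify the three ingredients of a non-trivial pre-order separately: reflexivity, transitivity, and the existence of a witnessing non-comparable pair. Two of these are essentially immediate, so the bulk of the work concentrates on transitivity, which is where composition of general-recursive clauses is subtle.

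For \textbf{non-triviality} I would invoke the triviality-reflection proposition already established: take $\L_1$ non-trivial and $\L_2$ trivial; then no back-and-forth mapping can exist between them, and \emph{a fortiori} no B\&F-GR translation, so the pre-order is not collapsed.

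For \textbf{reflexivity}, the identity map $id_\L:\L\to\L$ is trivially back-and-forth (since $\Gamma\vdash_\L\phi$ iff $id_\L[\Gamma]\vdash_\L id_\L(\phi)$) and is general-recursive by taking, for each $n$-ary connective $\#$, the template $\#^{id}(p_1,\dots,p_n)=\#(p_1,\dots,p_n)$ with all auxiliary mappings being $id$. If $\L$ satisfies the standard deduction theorem, the clause for $\rightarrow$ is likewise compositional, so $id_\L$ is general-recursive$^C$ as required.

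For \textbf{transitivity}, I would follow the path sketched in the text just before the proposition, splitting into cases according to whether the first translation $\T:\L_1\to\L_2$ is surjective. In the surjective case, concatenating the clause-sets $C_1,\dots,C_n$ of $\T$ with $C'_1,\dots,C'_m$ of $\T'$ yields $\T'\circ\T$, which is B\&F by the cited result of Feitosa and D'Ottaviano and is general-recursive because every $\L_2$-formula reached on the way is itself generated by the first clause-set, so each inductive step is covered. In the non-surjective case, I would define $\T^w:\L_1\to\L_3$ by restricting the second translation to the $\T'$-clauses that are actually invoked when translating formulas in $\T(\L_1)$; the forward direction $\phi\vdash_{\L_1}\psi\Rightarrow \T^w(\phi)\vdash_{\L_3}\T^w(\psi)$ follows by composing the two forward directions, and the converse direction is recovered precisely because, by construction, the conclusion is only tested on $\L_3$-formulas of the shape $\T'(\T(\phi))$, so one can backtrack through $\T'$ into $\T(\L_1)$ and then through $\T$ into $\L_1$. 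General-recursiveness of $\T^w$ is inherited from the finite combined clause-set, and the general deduction theorem is preserved as soon as both $\T$ and (the relevant $\T'$-clause for) the conditional are compositional, which gives general-recursiveness$^C$.

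The \textbf{main obstacle} I anticipate is the non-surjective transitivity case: one has to argue carefully that restricting $\T'$ to the ``relevant clauses'' yields a genuinely inductively defined mapping on all of $\F_1$ rather than a partial function, and that the back-and-forth property holds globally (not just on the image) once the translation is read as a map from $\L_1$. The excerpt's informal discussion already handles this, and I would formalize it by observing that since the clauses $C_1,\dots,C_n$ generate $\T(\phi)$ inductively from the atomic translations, the sub-clauses of $\T'$ triggered are themselves determined by the syntactic shape of the source formula, so $\T^w$ is well-defined on all of $\F_1$ and the composition argument for B\&F goes through verbatim. Preservation of the general deduction theorem then reduces to checking that compositionality for $\rightarrow$ survives the restriction, which it does whenever both translations are general-recursive$^C$.
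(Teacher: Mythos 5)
Your proposal is correct and follows essentially the same route as the paper: non-triviality via the triviality-reflection proposition, reflexivity via the identity translation, and transitivity split into the surjective case (composing clause-sets and citing Feitosa--D'Ottaviano) and the non-surjective case handled by the weakened composite $\T^w$ whose backward direction is only required on the intersected range of $\T'\circ\T$. You even isolate and address the same delicate point the paper does, namely that the restricted clause-set still defines $\T^w$ inductively on all of $\F_1$ and that compositionality for the conditional survives the restriction.
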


From the above propositions we can conclude that

\begin{corollary}
Every back-and-forth general-recursive translation preserves the connectives and the selected meta-properties (undecidability, non-triviality and general deduction theorem) and form a non-trivial pre-order on logics.
\end{corollary}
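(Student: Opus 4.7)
The plan is to obtain the corollary as an aggregation of the four propositions immediately preceding it, since each adequacy criterion was verified separately for back-and-forth general-recursive (B\&F-GR) translations. So the proof is essentially a bookkeeping exercise, invoking each earlier result in turn, while being careful to note the scope conditions under which each applies.

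First I would dispatch the pre-order claim directly by appealing to the last proposition, which establishes that B\&F-GR translations form a non-trivial pre-order: reflexivity is witnessed by the identity clauses, transitivity is obtained either by composition (in the surjective case) or by the weakened composition $\T^w$ constructed in the preceding discussion, and non-triviality is witnessed by any pair $\L_1,\L_2$ for which $\L_1$ is non-trivial and $\L_2$ is trivial, via the triviality-reflection proposition. Next I would handle the preservation of connectives by citing the proposition labelled \emph{Connective preservation (general sense)}, which shows that any B\&F-GR translation (not relying on model-mappings to convey the meaning of any connective) satisfies clause $(\alpha)$, and hence everything that can be said in terms of the connectives of $\L_1$ is expressible in terms of those of $\L_2$.

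For the meta-properties, I would treat them one by one. Non-triviality reflection is immediate from the corresponding proposition on back-and-forth translations, which is applicable since general-recursiveness is not needed there. Undecidability reflection follows from the Feitosa--D'Ottaviano proposition: a B\&F-GR translation is in particular computable, so it cannot go from an undecidable $\L_1$ into a decidable $\L_2$. For the general deduction theorem, I would invoke the proposition on conservative general-recursive$^C$ translations, noting that the B\&F-GR condition specialised to logics satisfying the \textsc{standard} deduction theorem gives precisely general-recursive$^C$ by the definition of $expressiveness_{gg}$, so the image $\T(\L_1)$ satisfies the \textsc{general} deduction theorem.

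The one subtle point, and the main place where care is needed rather than any real obstacle, is the scope of the deduction-theorem clause: the preservation statement only guarantees the general deduction theorem in $\T(\L_1)$ when $\L_1$ satisfies the \textsc{standard} version, which is exactly the hypothesis built into the definition of $expressiveness_{gg}$ (forcing $\T$ to be general-recursive$^C$ in that case). I would make this dependence explicit in the statement of the corollary rather than hide it, so that the aggregation is faithful to the separate propositions. Once these citations are laid out in order, no further computation is required and the corollary is established.
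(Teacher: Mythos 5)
Your proposal is correct and matches the paper's own treatment: the paper offers no separate argument for this corollary, introducing it only with ``From the above propositions we can conclude that,'' i.e.\ it is exactly the aggregation of the connective-preservation, deduction-theorem, triviality-reflection, undecidability and pre-order propositions that you carry out. Your explicit flagging of the scope conditions (the \textsc{standard} deduction theorem hypothesis forcing general-recursive$^C$, and the no-model-mappings proviso for connective preservation) is faithful to how those propositions are actually stated.
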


This implies that

\begin{corollary}
Every back-and-forth general-recursive translation not aided by model-mappings agrees with adequacy criteria 1,2 and 3.
\end{corollary}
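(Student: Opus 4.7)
The plan is to derive this corollary directly from the preceding one by matching each clause in the three adequacy criteria to a property already established in the body of Section 3. The preceding corollary compiles four facts about any back-and-forth general-recursive translation $\T:\L_1\longrightarrow\L_2$: (i) the connectives of $\L_1$ are preserved in $\L_2$ in the general sense of clause $(\alpha)$, provided $\T$ is not aided by model-mappings; (ii) if $\L_1$ satisfies the standard deduction theorem and $\T$ is in addition general-recursive$^C$, then $\T(\L_1)$ satisfies the general deduction theorem; (iii) triviality is reflected and, when $\L_1$ is undecidable, no such computable $\T$ can land in a decidable $\L_2$; (iv) the class of B\&F-GR translations forms a non-trivial pre-order on logics.

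First I would spell out criterion~1. By definition of $expressiveness_{gg}$, whenever $\L_2$ is at least as $expressive_{gg}$ as $\L_1$ there is a B\&F-GR translation $\T:\L_1\to\L_2$ that uses no model-mappings. Invoking item (i) of the previous corollary, clause $(\alpha)$ holds for $\T$: every $n$-ary (composite) connective of $\L_1$ has a target counterpart $\delta^{\otimes}$ built out of $\L_2$-connectives with matching deductive behaviour. This is exactly the refined form of criterion~1, so criterion~1 is discharged.

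Next I would address criterion~2, which has three sub-clauses. The non-triviality clause follows from the reflection of triviality by every back-and-forth translation, established in the proposition on triviality reflection: if $\L_1$ is non-trivial and $\T:\L_1\to\L_2$ is B\&F-GR, then $\L_2$ cannot be trivial. The decidability clause follows from the proposition that no computable back-and-forth translation exists from an undecidable to a decidable logic, since our $\T$ is computable by virtue of being defined recursively through the formation of formulas. The deduction-theorem clause is precisely the preservation proposition for general-recursive$^C$ translations: the definition of $expressiveness_{gg}$ explicitly strengthens $\T$ to be general-recursive$^C$ whenever $\L_1$ satisfies the standard deduction theorem, which is exactly the hypothesis needed to conclude that $\T(\L_1)$ satisfies the general deduction theorem. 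Thus all three sub-clauses of criterion~2 are met.

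Finally, criterion~3 is handled by item (iv): B\&F-GR translations have been shown to be reflexive, transitive (through the weakened composition $\T^{w}$ used to bypass the surjectiveness hypothesis), and non-trivial (since some pair $\L_1,\L_2$ admits no such translation by the triviality-reflection argument). Collecting these three verifications yields the corollary. I do not anticipate a hard step here, since the work is entirely bookkeeping against the previous corollary; the only subtle point, which I would make explicit, is that the ``not aided by model-mappings'' hypothesis is only needed to secure criterion~1, because criteria~2 and~3 follow from the purely syntactic features of B\&F-GR translations.
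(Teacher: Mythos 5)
Your proposal is correct and follows essentially the same route as the paper: the corollary is stated there as an immediate consequence of the preceding corollary, which in turn compiles the propositions on connective preservation, triviality reflection, undecidability, the general deduction theorem, and the non-trivial pre-order. Your write-up merely makes explicit the bookkeeping the paper leaves implicit (including the correct observation that the no-model-mappings hypothesis is only needed for criterion 1), so there is nothing to fault.
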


Many well known translations intuitively giving rise to an expressiveness relation satisfy $expressiveness_{gg}$. In the sequence, we briefly present them.

\subsection{Corroborating $expressiveness_{gg}$: the structure preserving translations}
\label{examples}

For the sake of supporting our notion of translational expressiveness, there follows some translations obeying the criterion that are reasonably taken as inducing an expressiveness relation.

\begin{itemize}
	\item (Wójcicki) from $\CPL$ into $\L^3$:

	\begin{itemize}[label={}]
		\item	$\T^l(p_i)=p_i$    \hfill $\T^l(¬\phi)= \T^l(\phi) \rightarrow ¬\T^l(\phi)$
		\item	$\T^l(\phi \rightarrow \psi) = \T^l(\phi) \rightarrow (\T^l(\phi) \rightarrow \T^l(\psi))$
	\end{itemize}

	\item (Gentzen) from classical first-order logic $\mathcal{CL}$ into intuitionistic first-order logic ($\mathcal{IL}$), and also from $\CL$ to Minimal first-order logic ($\M$) \cite[p. 218]{Prawitz-Malmnas}:\footnote{$\M$ is the intuitionistic logic without the rule of \emph{ex falso quodlibet.} An interesting result due to Luiz Carlos Pereira and Herman Haeusler  \cite{Pereira-Haeusler} is that this translation maps $\CL$ to \emph{any} intermediate logic between $\M$ and $\CL$.}

	\begin{itemize}[label={}]
		\item	$\T^c(Pt_1...t_n) = ¬¬Pt_1...t_n$ \hfill $\T^c(\phi \lor \psi) = ¬(¬\T^c(\phi) \land ¬\T^c(\psi))$
		
		\item	$\T^c(\exists x \phi) = ¬\forall x ¬\T^c(\phi)$ \hfill literal for $\bot, \land, \rightarrow$ and $\forall$;
	\end{itemize}
	
	\item (Gödel) from $\IL$ to $\CL$ extended with the modal system $S4$:
	
	\begin{itemize}[label={}]
		\item	$\T^s(R_it_1...t_n) = \square R_it_1...t_n$ \hfill	$\T^s(\forall x \phi) = \square \forall x(\T^s(\phi))$
		
		\item	$\T^s(\phi \rightarrow \psi) = \square(\T^s(\phi) \rightarrow \T^s(\psi))$ \hfill literal for $\bot, \land, \lor$ and $\exists$;
		
	\end{itemize}
	
	\item (Prawitz and Malmnäs) from $\IL$ to $\M$ (for $\# \in \{\land, \lor, \rightarrow\}$):

	\begin{itemize}[label={}]
		\item	$\T^{m1}(R_it_1,...,t_n)= R_it_1,...,t_n \lor \bot$   \hfill $\T^{m1}(\forall x \phi) = \forall x(\T^{m1}(\phi) \lor \bot)$

		\item	$\T^{m1}(\phi \,\,\#\,\, \psi)= (\T^{m1}(\phi) \,\,\#\,\, \T^{m1}(\psi)) \lor \bot$ \hfill $\T^{m1}(\bot) = \bot$;

	\end{itemize}

	\item (Demri and Goré) from $Grz$ to $S4$:

	\begin{itemize}[label={}]
		
		\item $\T_+(\square \phi)= \square ( \square [\T_+(\phi) \rightarrow \square \T_-(\phi)] \rightarrow \T_+(\phi))$ \hfill $\T_-(\square \phi)=\square \T_-(\phi)$		
		
		\item $\T_+(¬\phi) = ¬\T_-(\phi)$ \hfill $\T_-(¬\phi) = ¬\T_+(\phi)$

		\item $\T_+(\phi \rightarrow \psi)= \T_-(\phi) \rightarrow \T_+(\psi)$  \hfill $\T_-(\phi \rightarrow \psi) = \T_+(\phi) \rightarrow \T_-(\psi)$

		\item $\T_+$ and $\T_-$ are literal for $\land$ and atomic formulas;
	\end{itemize}
	
	\item (Van Benthem) Standard translation from modal logic to $\FOL$:	

	\begin{itemize}[label={}]
	 	\item	$\T^x(p_i)= P_ix$ \hfill $\T^x(\lozenge \phi)= \exists y (Rxy \land \T^y(\phi))$			
		
		\item	$\T^x(\square \phi) = \forall y (Rxy \rightarrow \T^y(\phi))$ \hfill literal for $¬,\land,\lor, \rightarrow, \bot$.
	\end{itemize}

\end{itemize}

\section{Conclusions}

	The commonly used precise notions of expressiveness are defined within a framework which is based on the capacity of characterizing structures, thus they apply only to model-theoretic logics. As this framework of expressiveness is defined only with respect to logics sharing the same class of structures, it was called in this text ``single-class expressiveness''. This framework can be seen as consisting of certain formula-mappings between model-theoretic logics.

	We saw two formal criteria for expressiveness, due to García-Matos and Väänänen and Kuijer, constructed in a wider framework which we called ``multi-class expressiveness''. This wider framework encompasses besides for\-mu\-la\--map\-pings, also model-mappings.  We argued that both criteria are inadequate for multi-class expressiveness. Then it was defended that moving to an even broader framework might be more promising, this is because the possibility of using model-mappings, as it happens with the counter-examples presented, opens a backdoor for ``undesirable'' translations. In the broader framework, which we called ``translational expressiveness'', a criterion for expressiveness would lack semantic notions and be based exclusively in terms of the existence of certain formula-mappings preserving the consequence relations of the logics at issue. 

	A proposal in this direction due to Mossakowski et al. was analysed and criticized, since it also over-generates. Studying the reasons for the over-generation, we proposed some adequacy criteria for relative expressiveness and a formal criterion of translational expressiveness satisfying them. The criterion is still limited in some aspects, as the notion of a structure-preserving translation is up to now only precisely defined with respect to propositional logics. The definition of structure-preserving translation is only intuitively extrapolated to quantifiers, that is, one would normally recognize a structure-preserving translation clause for a quantifier, though there is still no formal definition of it. Therefore, a truly broad formal criterion for expressiveness encompassing preservation of quantifiers is sill wanting, and we leave it for a future work.\footnote{As regards this limitation, it is curious to see that in Lindström's characterization of first-order logic as the most expressive logic satisfying countable compactness and the Löwenheim-Skolem theorem \cite{Lindstrom-EEL},  the notion of logic used does not have anything like a quantifier. Perhaps this is due to the wide interpretation of ``quantifier'' as a class of structures. Something like quantification is only required to prove a characterization with respect to the upward Tarski-Löwenheim-Skolem theorem \cite{Lindstrom-OCEL}. Only after sometime a proper ``quantifier property'' was required of an abstract logic extending first-order logic \cite{Barwise-AAMT}.}

\bibliographystyle{alpha}
\bibliography{bibliography.bib}

\begin{thebibliography}{vBTCV09}

\bibitem[AA17]{Agudelo-TNCLCL}
Juan~C. Agudelo-Agudelo.
\newblock Translating non-classical logics into classical logic by using hidden
  variables.
\newblock {\em Logica Universalis}, 11(2):205--224, 2017.

\bibitem[AFFM11]{Areces-et.al-EPML}
Carkis Areces, Diego Figueira, Santiago Figueira, and Sergio Mera.
\newblock The expressive power of memory logics.
\newblock {\em The Review of Symbolic Logic}, 4(2):290--318, 2011.

\bibitem[Bar74]{Barwise-AAMT}
Jon Barwise.
\newblock Axioms for abstract model theory.
\newblock {\em Annals of Mathematical Logic}, 7(2-3):221 -- 265, 1974.

\bibitem[B{\'e}z99]{Beziau-CNCBEOH}
Jean-Yves B{\'e}ziau.
\newblock Classical negation can be expressed by one of its halves.
\newblock {\em Logic Journal of the Igpl}, 7(2):145--151, 1999.

\bibitem[BF85]{Barwise-Feferman-MTL}
Jon Barwise and Solomon Feferman, editors.
\newblock {\em Model-theoretic logics}.
\newblock Perspectives in mathematical logic. Springer-Verlag, 1985.

\bibitem[BH94]{Balbiani-Herzig-TMLPK4}
Philippe Balbiani and Andreas Herzig.
\newblock A translation from the modal logic of provability into k4.
\newblock {\em Journal of Applied Non-Classical Logics}, 4(1):73--77, 1994.

\bibitem[BHT06a]{Broersen-Herzig-Troquard-2006b}
Jan Broersen, Andreas Herzig, and Nicolas Troquard.
\newblock Embedding alternating-time temporal logic in strategic stit logic of
  agency.
\newblock {\em Journal of Logic and Computation}, 16:559--578, 2006.

\bibitem[BHT06b]{Broersen-Herzig-Troquard-2006a}
Jan Broersen, Andreas Herzig, and Nicolas Troquard.
\newblock From coalition logic to stit.
\newblock {\em Electronic Notes in Theoretical Computer Science}, 157:23--35,
  2006.

\bibitem[CC02]{Coniglio-Carnielli-TBLA}
M.E. Coniglio and W.A. Carnielli.
\newblock Transfers between logics and their applications.
\newblock {\em Studia Logica}, 72(3):367--400, 2002.

\bibitem[CCD09]{Carnielli-Coniglio-Dottaviano-NDTBL}
Walter~A. Carnielli, Marcelo~E. Coniglio, and Itala~M.L. D'Ottaviano.
\newblock New dimensions on translations between logics.
\newblock {\em Logica Universalis}, 3(1):1--18, 2009.

\bibitem[CF15]{Coniglio-Figallo-FFHCF}
M.~E. Coniglio and M.~Figallo.
\newblock A formal framework for hypersequent calculi and their fibring.
\newblock In Arnold Koslow and Arthur Buchsbaum, editors, {\em The Road to
  Universal Logic: Festschrift for 50th Birthday of Jean-Yves B{\'e}ziau Volume
  I}, pages 73--93. Springer International Publishing, 2015.

\bibitem[CK90]{Chang-Keisler-MT}
C.C. Chang and H.J. Keisler.
\newblock {\em Model Theory}.
\newblock Studies in Logic and the Foundations of Mathematics. Elsevier
  Science, 1990.

\bibitem[Con05]{Coniglio-TSNTBL}
M.E. Coniglio.
\newblock Towards a stronger notion of translation between logics.
\newblock {\em Manuscrito}, 28(2):231--262, 2005.

\bibitem[Cor69]{Corcoran-TLT}
J.~Corcoran.
\newblock Three logical theories.
\newblock {\em Philosophy of Science}, 36(2):153--177, 1969.

\bibitem[DG00]{Demri-Gore-2000}
St{\'e}phane Demri and Rajeev Gor{\'e}.
\newblock An $\mathcal{O}((n \cdot log \, n)^3)$-time transformation from grz
  into decidable fragments of classical first-order logic.
\newblock In Ricardo Caferra and Gernot Salzer, editors, {\em Automated
  Deduction in Classical and Non-Classical Logics: Selected Papers}, pages
  152--166. Springer Berlin Heidelberg, Berlin, Heidelberg, 2000.

\bibitem[Eps13]{Epstein-SFL}
R.L. Epstein.
\newblock {\em The Semantic Foundations of Logic Volume 1: Propositional
  Logics}.
\newblock Nijhoff International Philosophy Series. Springer Netherlands, 2013.
\newblock Contributors: Carnielli, W. and d'Ottaviano, I.M. and Krajewski, S.
  and Maddux, R.D.

\bibitem[FD01]{Feitosa-Dottaviano-CT}
H{\'e}rcules~A. Feitosa and Itala M.~Loffredo D'Ottaviano.
\newblock Conservative translations.
\newblock {\em Annals of Pure and Applied Logic}, 108(1-13):205 -- 227, 2001.
\newblock XI Latin American Symposium on Mathematical Logic.

\bibitem[Fel90]{Felleisen-EPPL}
Matthias Felleisen.
\newblock On the expressive power of programming languages.
\newblock In Neil Jones, editor, {\em ESOP '90: 3rd European Symposium on
  Programming Copenhagen, Denmark, May 15--18, 1990 Proceedings}, pages
  134--151. Springer Berlin Heidelberg, Berlin, Heidelberg, 1990.

\bibitem[FJP03]{Font-Jansana-Pigozzi-SAAL}
J.~M. Font, R.~Jansana, and D.~Pigozzi.
\newblock A survey of abstract algebraic logic.
\newblock {\em Studia Logica}, 74(1):13--97, 2003.

\bibitem[FNG10]{Feitosa-Nascimento-Gracio-LTK}
H{\' e}rcules Feitosa, Mauri Nascimento, and Maria Gr{\' a}cio.
\newblock Logic tk: Algebraic notions from tarski's consequence operator.
\newblock {\em Principia: an international journal of epistemology},
  14(1):47--70, 2010.

\bibitem[Fre10]{French-PHD}
Rohan French.
\newblock {\em Translational Embeddings in Modal Logic}.
\newblock PhD thesis, Monash University, Melbourne, Australia, 2010.
\newblock Available at
  \url{http://rohan-french.github.io/writing/phd-thesis.pdf}.

\bibitem[GH96]{Gasquet-Herzig-FCNML}
Olivier Gasquet and Andreas Herzig.
\newblock From classical to normal modal logics.
\newblock In Heinrich Wansing, editor, {\em Proof Theory of Modal Logic}, pages
  293--311. Springer Netherlands, Dordrecht, 1996.

\bibitem[GJ05]{Goranko-Jamroga-CSLMAS}
Valentin Goranko and Wojciech Jamroga.
\newblock Comparing semantics of logics for multi-agent systems.
\newblock In {\em Information, Interaction and Agency}, pages 77--116. Springer
  Netherlands, Dordrecht, 2005.

\bibitem[Gla69]{Gladstone-SWCPC}
M.~D. Gladstone.
\newblock Some ways of constructing a propositional calculus of any required
  degree of unsolvability.
\newblock {\em Journal of Symbolic Logic}, 34(3):505--506, 1969.

\bibitem[Gli29]{Glivenko-SQPLB}
V.~I. Glivenko.
\newblock Sur quelque points de la logique de m. brouwer.
\newblock {\em Bulletin de la classe des sciences}, 15(4), 1929.

\bibitem[GMV07]{Garcia-Matos-Vaananen-AMTFUL}
Marta Garc{\'i}a-Matos and Jouko V{\"a}{\"a}n{\"a}nen.
\newblock Abstract model theory as a framework for universal logic.
\newblock In Jean-Yves Beziau, editor, {\em Logica Universalis}, pages 19--33.
  Birkh{\"a}user Basel, 2007.

\bibitem[G{\"o}d01]{Godel-1933e}
Kurt G{\"o}del.
\newblock On intuitionistic arithmetic and number theory - 1933e.
\newblock In K.~G{\"o}del, S.~Feferman, J.W. Dawson, S.C. Kleene, G.~Moore,
  R.~Solovay, and J.~van Heijenoort, editors, {\em Kurt G{\"o}del: Collected
  Works: Volume I: Publications 1929-1936}, Collected Works. OUP USA, 2001.

\bibitem[Gor10]{Gorla-TUAESRPC}
Daniele Gorla.
\newblock Towards a unified approach to encodability and separation results for
  process calculi.
\newblock {\em Information and Computation}, 208(9):1031 -- 1053, 2010.

\bibitem[Hae15]{Hermann-PLCSP}
Edward~Hermann Haeusler.
\newblock Propositional logics complexity and the sub-formula property.
\newblock In Ugo~Dal Lago and Russ Harmer, editors, {\em Proceedings Tenth
  International Workshop on Developments in Computational Models}, 2015.

\bibitem[HN12]{Hakli-Negri-DDTFML}
Raul Hakli and Sara Negri.
\newblock Does the deduction theorem fail for modal logic?
\newblock {\em Synthese}, 187(3):849--867, 2012.

\bibitem[Hum00]{Humberstone-CCL}
Lloyd Humberstone.
\newblock Contra-classical logics.
\newblock {\em Australasian Journal of Philosophy}, 78(4):438--474, 2000.

\bibitem[Hum05]{Humberstone-BTP}
Lloyd Humberstone.
\newblock Béziau's translation paradox.
\newblock {\em Theoria}, 71(2):138--181, 2005.

\bibitem[Je{\v{r}}12]{Jerabek-UCT}
E.~Je{\v{r}}\'{a}bek.
\newblock The ubiquity of conservative translations.
\newblock {\em The Review of Symbolic Logic}, 5:666--678, 12 2012.

\bibitem[Kos15]{Koslow-ID}
Arnold Koslow.
\newblock Implicit definitions, second order quantifiers and the robustness of
  logical operators.
\newblock In A.~Torza, editor, {\em Quantifiers, Quantifiers, and Quantifiers:
  Themes in Logic, Metaphysics, and Language}, Synthese Library. Springer
  International Publishing, 2015.

\bibitem[Kui14]{Kuijer-PHD}
Louwe~Bart Kuijer.
\newblock {\em Expressivity of Logics of Knowledge and Action}.
\newblock PhD thesis, University of Groningen, Groningen, Netherlands, 2014.
\newblock Available at
  \url{http://www.rug.nl/research/portal/files/15492763/Complete_dissertation.pdf}.

\bibitem[LB87]{Levesque-Brachman-ETKRR}
Hector~J. Levesque and Ronald~J. Brachman.
\newblock Expressiveness and tractability in knowledge representation and
  reasoning.
\newblock {\em Computational Intelligence}, 3(1):78--93, 1987.

\bibitem[Lin69]{Lindstrom-EEL}
P.~Lindström.
\newblock On extensions of elementary logic.
\newblock {\em Theoria}, 35(1):1--11, 1969.

\bibitem[Lin74]{Lindstrom-OCEL}
P.~Lindstr{\"o}m.
\newblock On characterizing elementary logic.
\newblock In S.~Stenlund, A.-M. Henschen-Dahlquist, L.~Lindahl, L.~Nordenfelt,
  and Jan Odelstad, editors, {\em Logical Theory and Semantic Analysis: Essays
  Dedicated to STIG KANGER on His Fiftieth Birthday}, pages 129--146. Springer
  Netherlands, Dordrecht, 1974.

\bibitem[Man96]{Manzano-EFOL}
M.~Manzano.
\newblock {\em Extensions of First-Order Logic}.
\newblock Cambridge Tracts in Theoretical Computer Science. Cambridge
  University Press, 1996.

\bibitem[MDT09]{Mossakowski-WILT}
Till Mossakowski, Razvan Diaconescu, and Andrzej Tarlecki.
\newblock What is a logic translation?
\newblock {\em Logica Universalis}, 3(1):95--124, 2009.

\bibitem[Men97]{Mendelson-IML}
E.~Mendelson.
\newblock {\em Introduction to Mathematical Logic, Fourth Edition}.
\newblock Chapman and Hall, London, 1997.

\bibitem[Mes89]{Meseguer-GL}
Jos{\' e} Meseguer.
\newblock General logics.
\newblock In H.D. Ebbinghaus, J.~Fernandes-Prida, M.~Garrido, D.~Lascar, and
  M.~Rodrigues~Artalejo, editors, {\em Logic Colloquium '87: Proceedings of the
  Colloquium Held in Granada, Spain, July 20-25, 1987}. North-Holland, 1989.

\bibitem[Mor16]{Moreira-PHD}
Angela P.~R. Moreira.
\newblock {\em Sobre tradu{\c c}{\~o}es entre l{\'o}gicas: rela{\c c}{\~o}es
  entre tradu{\c c}{\~o}es conservativas e tradu{\c c}{\~o}es contextuais
  abstratas}.
\newblock PhD thesis, Instituto de Filosofia e Ciencias Humanas, UNICAMP,
  Campinas, 2016.

\bibitem[OP10]{Otto-Piro-CGFMLGM}
M.~Otto and R.~Piro.
\newblock A lindstr{\"o}m characterisation of the guarded fragment and of modal
  logic with a global modality.
\newblock In L.~Beklemishev, V.~Goranko, and V.~Shehtman, editors, {\em
  Advances in Modal Logic, Volume 8}. College Publications, 2010.

\bibitem[Par08]{Parrow-EPA}
Joachim Parrow.
\newblock Expressiveness of process algebras.
\newblock {\em Electronic Notes in Theoretical Computer Science}, 209:173 --
  186, 2008.

\bibitem[Pet12]{Peters-PHD}
Kristin Peters.
\newblock {\em Translational Expressiveness- Comparing Process Calculi using
  Encodings}.
\newblock PhD thesis, Tenchichen Universit{\"a}t Berlin, Berlin, Germany, 2012.

\bibitem[PH16]{Pereira-Haeusler}
Luiz~Carlos Pereira and Edward~Hermann Haeusler.
\newblock Two basic results on translations between logics.
\newblock Unpublished typescript, 2016.

\bibitem[PM68]{Prawitz-Malmnas}
D.~Prawitz and P.~E. Malmn{\"a}s.
\newblock A survey of some connections between classical, intuitionistic and
  minimal logic.
\newblock In H.~Arnold Schmidt, K.~Sch{\"u}tte, and H.~J. Thiele, editors, {\em
  Contributions to Mathematical Logic, Proceedings of the Logic Colloquium,
  Hannover 1966}, pages 215--229. North-Holland Publishing Company, 1968.

\bibitem[Pog64]{Pogorzelski-DTLMVPC}
Witold~A. Pogorzelski.
\newblock The deduction theorem for {\l{}ukasiewicz} many-valued propositional
  calculi.
\newblock {\em Studia Logica: An International Journal for Symbolic Logic},
  15:7--23, 1964.

\bibitem[Sha91]{Shapiro-FWF}
S.~Shapiro.
\newblock {\em Foundations without Foundationalism : A Case for Second-Order
  Logic: A Case for Second-Order Logic}.
\newblock Oxford Logic Guides. Clarendon Press, 1991.

\bibitem[Sta79]{Statman-IPLPSC}
Richard Statman.
\newblock Intuitionistic propositional logic is polynomial-space complete.
\newblock {\em Theoretical Computer Science}, 9(1):67 -- 72, 1979.

\bibitem[Tar86]{Tarlecki-BPTI}
A~Tarlecki.
\newblock Bits and pieces of the theory of institutions.
\newblock In {\em Proceedings of a Tutorial and Workshop on Category Theory and
  Computer Programming}, pages 334--363, New York, NY, USA, 1986.
  Springer-Verlag New York, Inc.

\bibitem[Tho74a]{Thomason-RTLML-I}
S.~K. Thomason.
\newblock Reduction of tense logic to modal logic. i.
\newblock {\em J. Symbolic Logic}, 39(3):549--551, 09 1974.

\bibitem[Tho74b]{Thomason-RTLML-II}
S.~K. Thomason.
\newblock Reduction of tense logic to modal logic ii.
\newblock {\em Theoria}, 40(3):154--169, 1974.

\bibitem[vB06]{VanBenthem-WILGSI}
Johan van Benthem.
\newblock Where is logic going, and should it?
\newblock {\em Topoi}, 25(1):117--122, 2006.

\bibitem[vBTCV09]{VanBenthem-TenCate-Vaananen-LTFFOL}
J.~van Benthem, B.~Ten~Cate, and J.~V{\"a}{\"a}n{\"a}nen.
\newblock Lindstr{\"o}m theorems for fragments of first-order logic.
\newblock {\em Logical Methods in Computer Science}, 5(3), 2009.

\bibitem[W{\'o}j88]{Wojcicki-TLC}
R.~W{\'o}jcicki.
\newblock {\em Theory of Logical Calculi: Basic Theory of Consequence
  Operations}.
\newblock Synthese Library. Springer Netherlands, 1988.

\bibitem[Zem67]{Zeman-DT}
J.~Jay Zeman.
\newblock The deduction theorem in s4, s4.2, and s5.
\newblock {\em Notre Dame Journal of Formal Logic}, 8(1,2):56, 1967.

\bibitem[Zuc78]{Zucker-APCL}
J.~I. Zucker.
\newblock The adequacy problem for classical logic.
\newblock {\em Journal of Philosophical Logic}, 7(1):517--535, 1978.

\end{thebibliography}

\end{document}